\documentclass[]{amsart}
%

\usepackage{amssymb,amsmath,amscd}
\usepackage{epsfig,graphicx}

\usepackage{mathrsfs}
\usepackage{amsthm}
\usepackage{latexsym}
\usepackage{enumitem}

\usepackage{stmaryrd}

\usepackage{palatino}
\usepackage[T1]{fontenc}  

\usepackage[margin=0.8in]{geometry}

\theoremstyle{plain}
\newtheorem{theorem}{Theorem}[section]
\newtheorem{corollary}[theorem]{Corollary}

\newtheorem{proposition}[theorem]{Proposition}

\newtheorem*{theorem*}{Theorem}
\newtheorem*{proposition*}{Proposition}
\theoremstyle{definition}
\newtheorem{remark}[theorem]{Remark}
\newtheorem{definition}[theorem]{Definition}

\usepackage{tabu}       

\usepackage{nicefrac}       
\usepackage{microtype}      
\usepackage{lipsum}
\usepackage[utf8]{inputenc} 
\usepackage[T1]{fontenc}    

\usepackage{graphicx}  
\usepackage{amsmath}  
\usepackage{hyperref}
\usepackage{multirow}
\usepackage{url}
\usepackage[all]{xy}
\usepackage{microtype}
\usepackage{graphicx}
\usepackage{subcaption}
\usepackage{ upgreek }
\usepackage{booktabs} 
\usepackage{ dsfont }
\usepackage{amsfonts}       
\usepackage{ amssymb }

\usepackage{amsthm}

\usepackage{forloop}
\usepackage{comment}
\usepackage{natbib}
\setcitestyle{author-year, open={(},close={)}}
\usepackage{float}
\usepackage{tabularx}

\usepackage{tikz-cd}
\usepackage{tikz}

\newcommand{\overbar}{\overline}

\newcommand{\R}{\mathbb{R}}
\newcommand{\Z}{\mathbb{Z}}

\newcommand\SComplex{K}

\newcommand{\SpeOrthogonal}{\mathrm{SO}(3)}
\newcommand{\TwoSphere}{\mathbb{S}^2}

\newcommand{\Morse}{\mathrm{Morse}(\M)}

\newcommand\Id{\mathrm{Id}}

\newcommand{\IncHomSpaceS}{\mathrm{Aut}_{\leq}(\mathbb S^1)}
\newcommand{\NonDecMapsS}{\mathrm{End}_{\leq}(\mathbb S^1)}
\newcommand\M{M}
\newcommand\bM{\partial M}
\newcommand\fbM{a}
\newcommand\manifoldn{N}

\newcommand\CyclicGroup{\Gamma_n}

\newcommand\Barc{\mathrm{Bar}}
\newcommand\persmap{\mathrm{PH}}

\newcommand\TopSpace{{X}}

\newcommand\fMorse{\mathrm{Morse}_{f}(\M)}
\newcommand\fMorseFixedPoints{\mathrm{Morse}_{f}(\M; \Crit(f),\bM)}
\newcommand\DiffM{\mathcal{D}(\M)}
\newcommand\DiffMId{\mathcal{D}_{\mathrm{Id}}(\M)}
\newcommand\DiffSId{\mathcal{D}_{\mathrm{Id}}(\mathbb{S}^1)}
\newcommand\DiffSTwoId{\mathcal{D}_{\mathrm{Id}}(\mathbb{S}^2)}
\newcommand\DiffIId{\mathcal{D}_{\mathrm{Id}}([0,1])}
\newcommand\Orb{\mathcal{O}(f)}
\newcommand\OrbId{\mathcal{O}_{\mathrm{Id}}(f)}

\newcommand\Crit{\mathrm{Crit}}

\newcommand\LevM{\mathcal{S}_{\mathrm{Id}}(f)}

\newcommand\CompFiber{\persmap^{-1}_f(D)}
\newcommand\CompFiberBis{\Omega(D)}

\newcommand\Nbhood{\mathcal{U}}

\newcommand\SufInj{f_{\CompFiberBis{}}}
\newcommand\OrbIdSufInj{\mathcal{O}_{\mathrm{Id}}(\SufInj)}
\newcommand\LevMSufInj{\mathcal{S}_{\mathrm{Id}}(\SufInj{})}
\newcommand\IntervalsSurface{\mathrm{k}_D}
\newcommand{\Emb}{\mathrm{Emb}}

\newcommand\I{\text{I}}
\newcommand\Seq{\mathrm{Seq}}
\newcommand\Val{\mathrm{Val}}

\makeatletter
\newcommand{\rmathbb}[1]{{\mathpalette\dude@rmathbb{#1}}}
\newcommand{\dude@rmathbb}[2]{%
  \scalebox{-1}[1]{$\m@th#1\mathbb{#2}$}%
  }
\makeatother

\definecolor{dred}{rgb}{0.7,0.0,0.0}




\title{Fiber of Persistent Homology on Morse functions}
\markright{Fiber of Persistent Homology on Morse functions}
\author{Jacob Leygonie}
\address{Mathematical Institute \& University of Oxford}
\email{jacob.leygonie@maths.ox.ac.uk}
\thanks{Work supported by the Mathematical Institute of Oxford \& the EPSRC grant EP/R018472/1. Both authors are members of the Centre for Topological Data Analysis.}
\author{David Beers}
\address{Mathematical Institute \& University of Oxford}
\email{David.Beers@maths.ox.ac.uk}
\begin{document}
\maketitle
\begin{abstract}
Let~$f$ be a Morse function on a smooth compact manifold~$\M$ with boundary. The path component~$\CompFiber$ containing~$f$ of the space of Morse functions giving rise to the same Persistent Homology~$D=\persmap(f)$ is shown to be the same as the orbit of~$f$ under pre-composition~$\phi \mapsto f\circ \phi$ by diffeomorphisms of~$\M$ which are isotopic to the identity. Consequently we derive topological properties of the fiber~$\CompFiber$: In particular we compute its homotopy type for many compact surfaces~$\M$. In the~$1$-dimensional settings where~$\M$ is the unit interval or the circle we extend the analysis to continuous functions and show that the fibers are made of contractible and circular components respectively. 
\end{abstract}
%
\section{Introduction}
Persistent Homology is a central and computable descriptor in Topological Data Analysis (TDA) which has been applied to a large variety of data science problems. Namely the persistence map~$\persmap$ associates to a real-valued function~$f$ on a topological space~$\TopSpace$ a so-called barcode that captures the topological variations of its sub level-sets \citep{edelsbrunner2008persistent, zomorodian2005computing}. It is natural to ask how much information can be recovered from persistent homology: Given a barcode~$D$ what can we say about the fiber~$\persmap^{-1}(D)$?  

For the purpose of this paper~$\TopSpace=\M$ is an arbitrary smooth (finite-dimensional) compact manifold with boundary~$\partial M$ and~$f:\M \rightarrow \R$ is a Morse function, i.e.~$f$ has a prescribed constant value~$\fbM_j$ on each boundary component~$\bM_j$ and has isolated critical points none of which belong to~$\bM$. Denote by~$D:=\persmap(f)$ the associated barcode. Acting on~$f$ by pre-composition with the group~$\DiffMId$ of diffeomorphisms which are isotopic to the identity, we get an orbit~$\OrbId$ inside the space of Morse functions. Our core contribution (Theorem~\ref{theorem_fiber_equal_orbit}) is the equality between this orbit and the the path connected component~$\CompFiber$ containing~$f$ in the fiber of~$\persmap$ over~$D$:
\[\CompFiber=\OrbId.\]
This result crucially relies on Mather's fibration theorem for smooth mappings~\citep{mather1969stability}, which we slightly adapt to the case of Morse functions with equal critical values using results of~\citet{cerf1970stratification}. 

We then put at work the abundant literature about the homotopy type of the orbit~$\OrbId$, especially the work of~\citet{maksymenko2006homotopy}: the mapping~$\phi \mapsto f\circ \phi$ in fact defines a locally trivial fibration from~$\DiffMId$ to the orbit~$\OrbId$, with fiber~$\LevM \subseteq \DiffMId$ the diffeomorphisms stabilising~$f$, i.e.~$f\circ \phi=f$. Hence a long exact sequence links the fiber~$\CompFiber=\OrbId$ to well-studied diffeomorphism groups of the manifold~$\M$. In particular for any compact surface~$\M$, we compute~$\pi_n(\CompFiber)$ for~$n\geq 2$ (Proposition~\ref{theorem_homotopy_type_surface_saddles_homotopy_groups}). In fact, if~$D$ has distinct interval endpoints we derive the complete homotopy type of~$\CompFiber$ for many compact surfaces (Propositions~\ref{theorem_homotopy_type_surface_no_saddles} and~\ref{theorem_homotopy_type_surface_saddles}). 

Variations of this setting have already been addressed. In the discrete setting where~$\TopSpace=\SComplex$ is a finite simplicial complex and~$f$ is compatible with face inclusions, the fiber~$\persmap^{-1}(D)$ is a complex of polyhedra~\citep{leygonie2021fiber}. This structure can be used to algorithmically reconstruct the fiber~\citep{leygonie2021algorithmic}. In the restricted case where~$\SComplex$ is a line complex, each path connected component of~$\persmap^{-1}(D)$ is contractible~\citep{cyranka2018contractibility}, and it is homeomorphic to a circle in the case where~$\SComplex$ is a subdivision of the unit circle~\citep{mischaikow2021persistent}.

In the analogous, continuous $1$-dimensional setting where~$\TopSpace$ is the interval or the circle and~$f$ is continuous, each component in the fiber is contractible and circular respectively as we show in Section~\ref{appendix_interval_circle}. For the unit interval it is possible to count the number of path connected components in~$\persmap^{-1}(D)$ by means of the combinatorics of the barcode~\citep{curry2018fiber}. For higher dimensional~$\TopSpace$ analyzing the fiber is a challenging problem: already for Morse functions on the $2$-sphere~$\TopSpace=\TwoSphere$ new tools have been designed to describe the fiber~$\persmap^{-1}(D)$, and allowed for conjectures on the number of path connected components~\citep{catanzaro2020moduli}. However the higher dimensional homotopy groups of~$\persmap^{-1}(D)$ remain unknown, from which stems the motivation of this work.
\subsection*{Acknowledgements}
We wish to thank Ulrike Tillmann for her various insights on this project. 
\section{Stability of Morse functions}
\label{sec:morse_functions}
We fix a $d$-dimensional compact smooth manifold~$\M$ with boundary~$\bM$, whose path connected components are denoted by~$\bM_j$. Given another smooth manifold~$\manifoldn$, we denote by~$C^{\infty}(\M,\manifoldn)$ the space of smooth maps from~$\M$ to~$\manifoldn$ equipped with the~$C^{\infty}$ Whitney topology. We denote by~$\DiffM\subseteq C^{\infty}(\M,\M)$ the diffeomorphisms of~$\M$, and by~$\DiffMId$ its subspace of {\em $\Id$-isotopic} diffeomorphisms, i.e. the path connected component of the identity map. 

Given real values~$\fbM_j$, a smooth map~$f$ belongs to the space~$\Morse\subseteq C^{\infty}(\M,\R)$ of {\em Morse functions} on~$\M$ if:
\begin{itemize}
\item The Hessian of~$f$ is non-degenerate at critical points, all of which belong to~$M\setminus \bM$; and
\item The restrictions $f_{|\bM_j}$ to each boundary component~$\bM_j$ are constant with prescribed value~$\fbM_j$.
\end{itemize}
Then~$\DiffM$ acts on~$C^{\infty}(\M,\R)$ by pre-composition and we denote by~$\Orb{}\subseteq \Morse$ the orbit of~$f$, and by~$\OrbId{}\subseteq \Orb{}$ the orbit of~$f$ under the restricted action of $\Id$-isotopic diffeomorphisms. 

To express the local triviality results of this section, we rely on the notion of local cross-sections defined below. 

\newcommand{\Group}{\mathcal{G}}
\newcommand{\LocalSection}{s}
\begin{definition}
\label{definition_local_cross_section}
Let~$\Group$ be a topological group acting on a topological space~$\TopSpace$. Given~$x_0\in \TopSpace$, a {\em local cross-section} for the action of~$\Group$ on~$\TopSpace$ at~$x_0$ is a continuous map~$\LocalSection:\Nbhood\rightarrow \Group$ defined on an open neighborhood~$\Nbhood$ of~$x_0$ satisfying:
\[\forall x\in \Nbhood, \, \LocalSection(x) \cdot x_0=x.\]
We say that the action of~$\Group$ on~$\TopSpace$ {\em admits local cross-sections} if it does so at each point.
\end{definition}
Up to replacing~$\LocalSection(x)$ by~$\LocalSection(x)  \cdot \LocalSection(x_0)^{-1}$ in the above definition, we can assume that~$\LocalSection(x_0)$ is the identity element. 
\begin{remark}
\label{remark_local_sections_implies_triviality}
It is well-known that, if~$\TopSpace$ admits local cross-sections, then any~$\Group$-equivariant map from a~$\Group$-space to~$\TopSpace$ is locally trivial, see e.g.~\citet[Theorem A]{palais1960local}.
\end{remark}
The main result of this section adapts Mather's stability of smooth mappings~\citep{mather1969stability} to the case of Morse functions with equal critical points by combining results of Cerf: 
\begin{proposition}
\label{prop:stability}
Let~$f\in \Morse$ and~$\fMorse$ be the subspace of Morse functions with the same critical values as~$f$. Then the action of~$\DiffMId$ on~$\fMorse$ admits local cross-sections. 
%
%
%
\end{proposition}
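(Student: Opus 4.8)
The plan is to reduce the statement to an infinitesimal surjectivity property of the pre-composition action and then integrate it via Mather's machinery, with Cerf's stratification results entering to eliminate the target diffeomorphisms. Fix a base point $g_0\in\fMorse$; the goal is a continuous assignment $g\mapsto \LocalSection(g)\in\DiffMId$, defined for $g$ near $g_0$, with $\LocalSection(g)\cdot g_0=g$ and $\LocalSection(g_0)=\Id$ (the normalization being permitted by the discussion after Definition~\ref{definition_local_cross_section}).

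First I would identify the relevant tangent data. The ``tangent space'' to $\fMorse$ at $g_0$ consists of smooth functions $h$ that preserve the critical values of $g_0$ to first order; since the critical value at a critical point $p$ varies to first order by $h(p)$, this amounts to requiring $h(p)=0$ for each $p\in\Crit(g_0)$, together with $h$ vanishing on $\bM$ to respect the boundary conditions. The tangent space to the orbit $\DiffMId\cdot g_0$ is the set of functions $dg_0(\xi)$ for vector fields $\xi$ on $\M$ tangent to $\bM$; these automatically vanish on $\Crit(g_0)\cup\bM$. The infinitesimal claim I must verify is that these two spaces coincide, i.e.\ that every $h$ vanishing on $\Crit(g_0)\cup\bM$ has the form $dg_0(\xi)$.

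This infinitesimal surjectivity is a local computation. Near a critical point I would apply the Morse Lemma to put $g_0$ in normal form and solve $dg_0(\xi)=h$ by Hadamard's Lemma, using precisely that $h$ vanishes at the critical point. Where $dg_0$ is nonvanishing the equation is solvable by choosing $\xi$ proportional to the gradient, and near $\bM$ the absence of boundary critical points lets me take $\xi$ tangent to $\bM$. A partition of unity then globalizes, producing the desired $\xi$; this is the adaptation of infinitesimal stability to the equal-critical-value setting.

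The main obstacle, and where I expect the real work, is passing from this infinitesimal statement to honest local cross-sections while remaining inside the fixed-critical-value stratum $\fMorse$. Mather's stability theorem yields local cross-sections for the full right-left action of $\DiffM\times\DiffRId$, producing both a source diffeomorphism and a target reparametrization $\psi\in\DiffRId$ with $g=\psi\circ(g_0\circ\phi)$ depending continuously on $g$. To land in $\fMorse$ by pre-composition alone I must remove the target factor: here I would invoke Cerf's description of the natural stratification of the function space, which supplies a local section of the critical-value map and hence lets me absorb $\psi$ once $g$ and $g_0$ are constrained to share the same critical values. Finally, with $\LocalSection(g_0)=\Id$, continuity forces $\LocalSection(g)$ into the identity component $\DiffMId$ after shrinking the neighborhood, completing the construction.
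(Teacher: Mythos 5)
Your infinitesimal computation is correct and identifies the right tangent-level statement: tangent vectors to $\fMorse$ at $g_0$ are exactly the functions vanishing on $\Crit(g_0)\cup\bM$, and each such function is of the form $dg_0(\xi)$ by the Morse Lemma, Hadamard's Lemma and a partition of unity. The gap is the integration step, and it is not a technicality --- it is the entire content of the proposition. Mather's stability theorem, invoked as a black box, applies only when $g_0$ is an infinitesimally stable map, which for a Morse function forces the critical values to be pairwise distinct; the proposition is stated for arbitrary $f\in\Morse$, and the case of coincident critical values is precisely the one the paper flags as requiring an adaptation of Mather. When two critical values coincide, the right--left orbit of $g_0$ is not a neighborhood of $g_0$ in $C^{\infty}(\M,\R)$ (nearby functions separate the critical values), so there is no local cross-section $g\mapsto(\phi_g,\psi_g)$ for the $\DiffM\times\DiffRId$ action on a full neighborhood of $g_0$: the target factor $\psi$ you plan to ``absorb'' does not exist to begin with. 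Nor can you fall back on Mather's homotopy method, since it integrates along the straight-line path $t\,g+(1-t)\,g_0$, which leaves the stratum $\fMorse$, and off the stratum your infinitesimal surjectivity is unavailable (tangent vectors there need not vanish at critical points). Finally, even in the distinct-critical-value case, ``Cerf's stratification lets me absorb $\psi$'' is not yet an argument: trading a target reparametrization that fixes the critical values for a source diffeomorphism, continuously in $g$, is a statement of essentially the same depth as the proposition itself, and the boundary conditions (prescribed constants $\fbM_j$ and varying normal jets along $\bM$) are nowhere addressed.

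The paper's proof sidesteps the integration problem entirely: it quotes Cerf's result (Proposition~\ref{prop:cerf_stability_same_crit_points}), which already supplies local cross-sections for the action on the subspace $\fMorseFixedPoints$ of Morse functions with the same critical points, critical values and boundary jets as $f$, and reduces the general case to it by a finite-dimensional normalization --- local cross-sections for embedding spaces (Proposition~\ref{proposition_local_triviality}) produce, continuously in $g$, one diffeomorphism returning $\Crit(g)$ to $\Crit(f)$ and another matching $g$ with $f$ on a collar of $\bM$. If you want to salvage your route, the honest version is to rerun Mather's construction along paths constrained to the stratum, which is in effect what Cerf's appendix does; citing his proposition directly is the shorter path.
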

The first result of Cerf we use is essentially a version of Proposition~\ref{prop:stability} restricted to the space~$\fMorseFixedPoints$ of Morse functions with the same critical points and critical values as~$f$ and the same value and derivatives of any order as~$f$ on~$\bM$.
\begin{proposition}[\cite{cerf1970stratification}, Appendix, §1, Proposition~1]
\label{prop:cerf_stability_same_crit_points}
Let $f:\M \to \R$ be a Morse function. Let~$\Group$ be the subspace of diffeomorphisms fixing the critical points of~$f$ and~$\bM$, and which have the same value and derivatives of every order as the identity on~$\bM$. Then the action of~$\Group$ on~$\fMorseFixedPoints$ admits local cross-sections.
\end{proposition}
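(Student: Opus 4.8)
The plan is to prove this by a parametrized Moser-type (path) argument: realize each nearby $g$ as $f$ composed with a diffeomorphism obtained by integrating a time-dependent vector field. Fix $f$ and, for $g$ in a neighborhood $\Nbhood$ of $f$ in $\fMorseFixedPoints$, consider the straight-line path $f_t := (1-t)f + t g$, $t \in [0,1]$. First I would check that, after shrinking $\Nbhood$ in the $C^\infty$ topology, each $f_t$ again lies in $\fMorseFixedPoints$: every point of $\Crit(f)$ stays critical for $f_t$ with Hessian $\mathrm{Hess}(f)_p + t(\mathrm{Hess}(g)_p - \mathrm{Hess}(f)_p)$, which remains non-degenerate for $g$ close to $f$; these remain the only critical points, since $\|df_t - df\|$ is small while $df$ is bounded away from $0$ off a fixed neighborhood of $\Crit(f)$; the common critical values are preserved because $f(p)=g(p)$; and $f_t - f = t(g-f)$ vanishes to infinite order on $\bM$. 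Thus $t \mapsto f_t$ is a smooth path in $\fMorseFixedPoints$ joining $f$ to $g$ and depending continuously on $g$.

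Next I would solve the homological equation $df_t(X_t) = f - g$ (that is, $df_t(X_t) = -\tfrac{d}{dt} f_t$) for a time-dependent vector field $X_t$; its flow $\phi_t$ then satisfies $\tfrac{d}{dt}(f_t \circ \phi_t) = \bigl(\tfrac{d}{dt}f_t + df_t(X_t)\bigr)\circ \phi_t = 0$, so that $f_t \circ \phi_t = f$ for all $t$ and in particular $g \circ \phi_1 = f$, giving $f \circ \phi_1^{-1} = g$. Away from $\Crit(f)$ the equation is solved explicitly by $X_t = (f-g)\,\grad f_t / |\grad f_t|^2$ for a fixed background metric; since $g - f$ vanishes to infinite order on $\bM$ and $\grad f_t \neq 0$ near $\bM$, this local solution vanishes to infinite order on $\bM$. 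The delicate part is solving the equation near each $p \in \Crit(f)$, where $df_t$ vanishes: there $g - f$ vanishes to second order at $p$ (same value, same first derivatives), and a Hadamard/division argument in a fixed chart — writing $\partial_i f_t = \sum_j a_{ij} x_j$ with $(a_{ij})$ invertible by non-degeneracy and $g - f = \sum_{i,j} x_i x_j h_{ij}$ — produces a smooth local solution $X_t$ that vanishes at $p$ and depends continuously on $f_t$. I would then glue the local solutions with a fixed partition of unity subordinate to a fixed cover by disjoint charts around the points of $\Crit(f)$ and their complement; since each piece solves the same affine equation $df_t(\cdot) = f-g$, so does the convex combination.

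Finally, as $\M$ is compact and $X_t$ vanishes on $\bM$, integrating $X_t$ yields a flow $\phi_t$ defined for all $t\in[0,1]$ with $\phi_0 = \Id$. Because $X_t$ vanishes at each critical point and to infinite order on $\bM$, the diffeomorphism $\phi_1$ fixes $\Crit(f)$ and agrees with $\Id$ to infinite order on $\bM$, so $\LocalSection(g) := \phi_1^{-1}\in \Group$. Continuous dependence of ODE solutions on the generating vector field, together with the continuity of the Hadamard data and the partition-of-unity construction in $g$, makes $g \mapsto \LocalSection(g)$ continuous; when $g = f$ we get $X_t \equiv 0$ and $\LocalSection(f) = \Id$; and by construction $f \circ \LocalSection(g) = g$, i.e. $\LocalSection(g)\cdot f = g$, as required.

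I expect the main obstacle to be the critical-point step: solving the division equation with a smooth vector field that vanishes at $p$ and, crucially, depends continuously on the parameter $g$ as $g$ ranges over $\Nbhood$. This is precisely where Cerf's careful use of the relative Morse lemma (and its parametrized version) enters, and where one must verify that shrinking $\Nbhood$ keeps the Hessian data uniformly invertible, so that the Hadamard coefficients vary continuously in the $C^\infty$ topology.
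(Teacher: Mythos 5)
The paper offers no proof of this proposition at all: it is imported verbatim from Cerf's memoir (the cited Appendix, \S 1, Proposition 1), so there is no internal argument to compare yours against --- the comparison is between your self-contained argument and a black-box citation. Judged on its own merits, your path-method (Moser) proof is essentially correct, and it is in substance the classical ``m\'ethode du chemin'' that underlies Cerf's result. The three pillars are all sound: (i) the segment $f_t=(1-t)f+tg$ stays in $\fMorseFixedPoints$ for $g$ in a small neighborhood of $f$ --- invertibility of the Hessian is an open condition, so it persists along the whole segment; $df$ is bounded below off fixed charts around $\Crit(f)$; and inside those charts the Hadamard form $\partial_i f_t=\sum_j a_{ij}(x)x_j$ with $(a_{ij})$ invertible rules out \emph{extra} critical points of $f_t$, a point your first paragraph glosses over but which is exactly the uniform-invertibility condition you flag at the end, so no new idea is needed; (ii) the homological equation $df_t(X_t)=f-g$ is linear in $X_t$, so the gradient solution away from $\Crit(f)$ and the Hadamard-division solutions near each non-degenerate critical point patch correctly through a partition of unity; (iii) since $X_t$ vanishes at $\Crit(f)$ and to infinite order on $\bM$, the flow exists for all $t\in[0,1]$ (boundary points are stationary, and by uniqueness of ODE solutions interior trajectories never reach $\bM$), fixes $\Crit(f)$, and agrees with the identity to infinite order along $\bM$, so $\LocalSection(g)=\phi_1^{-1}\in\Group$ and $f\circ\LocalSection(g)=g$ as the pre-composition action requires. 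Two details worth making explicit if you write this up: the identity $g(p)=f(p)$ at each $p\in\Crit(f)$ (needed for second-order vanishing of $f-g$) holds for all $g$ sufficiently close to $f$ even under the weaker reading of ``same critical values'' as equality of value sets, since critical values of $f$ are finitely many; and continuity of $g\mapsto\LocalSection(g)$ follows because the Hadamard coefficients are given by integral formulas (hence depend continuously, indeed affinely, on $g$ in the $C^{\infty}$ topology) and the flow map from time-dependent $C^{\infty}$ vector fields to diffeomorphisms is continuous on a compact manifold. What your route buys is self-containedness; what the paper's route buys is brevity, delegating precisely these parametrized estimates to Cerf.
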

To enhance this result to Morse functions with critical points and derivatives at the boundary allowed to vary, we need the reformulation of a result of Cerf~\cite[Theorem 5]{cerf1961topologie} for the $C^\infty$ case given in \cite{hong2012diffeomorphisms}.
\newcommand{\submanifold}{N}
\newcommand{\bNbhood}{S}
\newcommand{\compactNbhood}{L}
\begin{proposition}
\label{proposition_local_triviality}
Let~$\submanifold\subseteq \M$ be a compact submanifold. Suppose that:
\begin{itemize}
    \item[(a)] either~$\submanifold$ has no boundary and does not intersect the boundary~$\bM$;
    \item[(b)] or~$\submanifold$ is a closed collar neighborhood of a boundary component~$\bM_j$.
\end{itemize}
Let~$\compactNbhood$ be a compact neighborhood of~$\submanifold$. Let~$\Emb_{ \compactNbhood}(\submanifold,\M)$ be the space of embeddings~$j:\submanifold \hookrightarrow \M$ such that~$j(\submanifold)\subseteq \compactNbhood$,~$j^{-1}(\bM)=\submanifold\cap \bM$, and that restricts to the identity on~$\submanifold\cap \bM$.
Denote by~$\mathcal{D}_{\compactNbhood}(\M)$ the space of diffeomorphisms inducing the identity on~$(\M\setminus \compactNbhood)$.
Then the action of~$\mathcal{D}_{ \compactNbhood}(\M)$ on~$\Emb_{ \compactNbhood}(\submanifold,\M)$ admits local cross-sections. 
\end{proposition}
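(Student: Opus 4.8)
The plan is to prove this as an instance of the \emph{parametrized isotopy extension theorem}, realizing the required local cross-section as the time-one flow of a vector field that depends continuously on the embedding. Fix a base point $j_0\in \Emb_{\compactNbhood}(\submanifold,\M)$ and write $W:=j_0(\submanifold)$, a compact submanifold contained in $\compactNbhood$ which is disjoint from $\bM$ in case (a) and meets $\bM$ exactly along $\bM_j$ in case (b). First I would set up coordinates near $W$ via a tubular neighborhood theorem adapted to the boundary: in case (a) the ordinary tubular neighborhood of a closed submanifold of the interior, and in case (b) one compatible with the collar, whose fibers are tangent to $\bM$ along $\bM_j$ and which respects the product structure $\submanifold\cong \bM_j\times[0,1]$, so that a neighborhood of $\bM$ is carried to a union of fibers. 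After shrinking, I arrange the tubular neighborhood to lie in $\compactNbhood$. This identifies a neighborhood of $W$ in $\M$ with a disk bundle $E$ over $\submanifold$ having $W$ as the zero section.

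Next I would parametrize the embeddings $j$ lying near $j_0$. For $j$ sufficiently $C^1$-close to $j_0$, the image $j(\submanifold)$ lies inside $E$ and projects diffeomorphically onto $\submanifold$, so $j$ is encoded by its normal component together with a base reparametrization close to the identity. Using this encoding I would define a canonical smooth path $t\mapsto j^{\,j}_t$, $t\in[0,1]$, of embeddings with $j^{\,j}_0=j_0$ and $j^{\,j}_1=j$, obtained by linearly scaling the normal component in the bundle $E$ and interpolating the base reparametrization. This path stays inside $\Emb_{\compactNbhood}(\submanifold,\M)$, depends smoothly on $j$, and in case (b) remains equal to the identity on $\submanifold\cap\bM$ for all $t$.

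The core step is the extension of the velocity field of this path to the ambient manifold. Differentiating yields $\xi^{\,j}_t(p):=\tfrac{d}{dt}\,j^{\,j}_t(p)\in T_{j^{\,j}_t(p)}\M$, a time-dependent vector field along $j^{\,j}_t$. Using the tubular projection and a fixed bump function supported in $\compactNbhood$, I would extend $\xi^{\,j}_t$ to a compactly supported field $X^{\,j}_t$ on $\M$ satisfying $X^{\,j}_t\big(j^{\,j}_t(p)\big)=\xi^{\,j}_t(p)$, vanishing identically outside $\compactNbhood$, and—in case (b)—tangent to $\bM$ and vanishing along $\bM_j$. Since this extension is (piecewise) linear in $\xi^{\,j}_t$, it depends continuously on $j$. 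Its flow $\Phi^{\,j}_t$ is then an isotopy with $\Phi^{\,j}_0=\Id$, $\Phi^{\,j}_t\in \mathcal{D}_{\compactNbhood}(\M)$, and $\Phi^{\,j}_t\circ j_0=j^{\,j}_t$; setting $s(j):=\Phi^{\,j}_1$ gives
\[
 s(j)\circ j_0=j^{\,j}_1=j, \qquad s(j_0)=\Id.
\]

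Finally I would verify that $s$ is continuous into $\mathcal{D}_{\compactNbhood}(\M)$ with its Whitney topology: every ingredient (the tubular projection, the bump function, the linear extension operator, and the integration of a compactly supported smooth time-dependent field over the fixed compact interval $[0,1]$ living over the fixed compact set $\compactNbhood$) depends continuously on $j$, so continuous dependence of ODE solutions on parameters yields continuity of $j\mapsto \Phi^{\,j}_1$. I expect the main obstacle to be case (b): one must make the adapted tubular neighborhood, the interpolating path, and the vector-field extension \emph{simultaneously} respect $\bM$—tangent to it and fixing $\bM_j$—while preserving continuity in $j$. This is precisely where the hypotheses $j^{-1}(\bM)=\submanifold\cap\bM$ and $j|_{\submanifold\cap\bM}=\Id$ enter, guaranteeing that each $\xi^{\,j}_t$, and hence its extension $X^{\,j}_t$, is tangent to and vanishes on the boundary; it is exactly this refinement over the interior case that is supplied by Cerf's theorem in the form used here.
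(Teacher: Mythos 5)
The paper does not prove this proposition from first principles: its ``proof'' is a citation of Theorem~3.1 of Hong--Kalliongis--McCullough--Rubinstein, which is a $C^\infty$ reformulation of Cerf's theorem on spaces of embeddings. So your attempt at a direct argument is a genuinely different route. For case (a) --- a closed submanifold in the interior of $\M$, which in the paper's application is just a finite set of critical points --- your plan is the standard Palais--Cerf isotopy-extension argument and is sound in outline: tubular neighborhood, graph parametrization of nearby embeddings, canonical interpolation to $j_0$, extension of the velocity field by a cutoff supported in $\compactNbhood$, and integration of the resulting compactly supported time-dependent field, with continuity in $j$ coming from continuous dependence of flows on parameters.

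The genuine gap is in case (b). There $\submanifold$ is a closed collar of $\bM_j$, a compact submanifold of \emph{codimension zero}, so the normal bundle has rank zero: your ``disk bundle $E$ with $W$ as zero section'' degenerates to $W$ itself, nearby embeddings are not graphs over $W$ (their images need not even lie in $W$), and ``linearly scaling the normal component'' is vacuous. The actual difficulties in this case are (i) canonically isotoping a codimension-zero embedding $j$ of the collar back to $j_0$ through embeddings in $\Emb_{\compactNbhood}(\submanifold,\M)$, and (ii) extending the resulting velocity field, which is defined only on the moving codimension-zero image $j^{\,j}_t(\submanifold)$, to a smooth ambient field: the field must be extended \emph{past the outer edge} $j^{\,j}_t(\bM_j\times\{1\})$ of the image in a way that is smooth and continuous in $j$, since a naive cutoff inside the image would make the flow reproduce $j$ only on a strictly smaller collar, which does not prove the statement as given. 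By contrast, the issue you flag as the main obstacle --- tangency to $\bM$ and vanishing along $\bM_j$ --- is the easy part: since every $j$ in the space fixes $\bM_j$ pointwise, the velocity field automatically vanishes there. Your closing sentence defers the hard refinement to ``Cerf's theorem in the form used here,'' but that theorem \emph{is} the statement being proved, so as written the argument for case (b) is circular rather than complete.
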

\begin{proof}
This is a direct consequence of Theorem~3.1 in~\cite{hong2012diffeomorphisms}. Note that, strictly speaking, embeddings~$j$ as defined in~\cite[Definition~2.5]{hong2012diffeomorphisms} are required to admit an extension to a diffeomorphism of~$\M$. However this assumption is unnecessary for their Theorem~3.1 and so we omit it.
\end{proof}
\begin{proof}[Proof of Proposition~\ref{prop:stability}]
Since~$g\in \fMorse$ implies that~$\mathrm{Morse}_{g}(\M)=\fMorse$, it is enough to construct a local cross-section at~$f$. By Proposition~\ref{prop:cerf_stability_same_crit_points}, there exists a local cross-section~$g\in \Nbhood_f \mapsto \phi_g \in \DiffMId$ defined on a neighborhood~$\Nbhood_f$ of~$f$ in~$\fMorseFixedPoints$, the space of functions with the same critical points~$p_1,\cdots,p_n$ and critical values as~$f$, and with the same value and derivatives~$\partial^k f$ as~$f$ on the boundary~$\bM$. Therefore
\begin{equation}
\label{eq_1_stability_proof}
\forall g\in \Nbhood_f, \, \,  g= f \circ \phi_g. 
\end{equation}
For the general case where critical points and derivatives on the boundary are allowed to vary we simply find a diffeomorphism sending them back to~$p_1,\cdots,p_n$ and~$\partial^k f$ and apply the above result. 

Namely, from item (a) of Proposition~\ref{proposition_local_triviality}, we can find disjoint compact neighborhoods~$U_1,\cdots,U_n$ of~$p_1,\cdots,p_n$ and continuously associate to $(p'_1,\cdots,p'_n)\in U_1\times \cdots\times U_n$ a diffeomorphism~$\psi_{(p'_1,\cdots,p'_n)}\in \DiffMId$ such that~$\psi_{(p_1,\cdots,p_n)}=\Id{}$ and:
\[ \forall (p'_1,\cdots,p'_n)\in U_1\times \cdots\times U_n, \, \, \, \forall 1\leqslant i \leqslant n, \,\,\, \psi_{(p'_1,\cdots,p'_n)}(p_i)= p'_i. \]
Let~$\Nbhood\subseteq \fMorse$ be a neighborhood of~$f$ for which any~$g\in \Nbhood$ has critical points~$\Crit(g)$ in~$U_1,\cdots,U_n$. In particular in this case~$g\circ \psi_{\Crit(g)}$ has the same critical points~$p_1,\cdots,p_n$ as~$f$, so it remains to deal with the boundary~$\bM$.

Let~$\bM_j$ be a boundary component. By flowing along the normalized gradient of~$f$ (or its inverse) from the boundary~$\bM_j$ we get a compact collar~$V_j\cong \bM_j \times [0,\alpha]$ that is adapted to~$f$ in the sense that~$f(x,t)=\fbM_j\pm t$, w.l.o.g.~$f(x,t)=\fbM_j+t$. For~$g$ in a small neighborhood~$\mathcal{V}\subseteq \fMorse$ of~$f$, we have~$g(\bM_j \times [0,\frac{\alpha}{2}] )\subseteq [\fbM_j,\fbM_j+\alpha]$. Therefore, after potentially shrinking $\mathcal{V}$, we can continuously associate to~$g\in \mathcal{V}$ the embedding~$\iota_g: (x,t)\in \bM_j \times [0,\frac{\alpha}{2}] \mapsto (x,g(x,t)-\fbM_j)\in V_j$ that preserves~$\bM_j$ and satisfies~$g=f\circ \iota_g$ on~$\bM_j \times [0,\frac{\alpha}{2}]$. Hence by using item (b) of Proposition~\ref{proposition_local_triviality}, up to shrinking~$\mathcal{V}$, we can extend~$\iota_g$ to a diffeomorphism~$\chi_g$ that induces the identity outside~$V_j$. By repeating this process for each boundary component~$\bM_j$, we can continuously associate to~$g\in \mathcal{V}$ a diffeomorphism~$\chi_g$ such that~$g$ and~$f\circ \chi_g$ agrees on a closed collar neighborhood of the boundary.
 
By reducing the neighorhoods~$U_i$ and~$V_j$ to avoid overlaps, we have that for any~$g$ in~$\Nbhood\cap \mathcal{V}$ the Morse function~$g\circ \psi_{\Crit(g)} \circ \chi_{g\circ \psi_{\Crit(g)}}^{-1}$ has the same critical points as~$f$ and agrees with~$f$ on a neighborhood of the boundary~$\bM$, in particular it belongs to~$\fMorseFixedPoints$. Hence by Eq.~\eqref{eq_1_stability_proof}, up to shrinking~$\Nbhood\cap \mathcal{V}$, we have
\[\forall g\in \Nbhood\cap \mathcal{V}, g=(g\circ \psi_{\Crit(g)} \circ \chi_{g\circ \psi_{\Crit(g)}}^{-1} ) \circ \chi_{g\circ \psi_{\Crit(g)}}\circ \psi_{\Crit(g)}^{-1} =  f \circ \phi_{g\circ \psi_{\Crit(g)} \circ \chi_{g\circ \psi_{\Crit(g)}}^{-1} } \circ \chi_{g\circ \psi_{\Crit(g)}} \circ \psi_{\Crit(g)}^{-1},\]
hence the local cross-section:
\[\LocalSection: g \in \Nbhood\cap \mathcal{V} \longmapsto \phi_{g\circ \psi_{\Crit(g)} \circ \chi_{g\circ \psi_{\Crit(g)}}^{-1} } \circ \chi_{g\circ \psi_{\Crit(g)}} \circ \psi_{\Crit(g)}^{-1} \in \DiffMId. \qedhere\]
\end{proof}
\begin{corollary}
\label{corollary_path_of_functions_same_critical_values}
Let~$(f_t)_{t\in [0,1]}\subseteq \Morse$ be a path of Morse functions with the same critical values. Then there exists~$\phi \in \DiffMId$ an $\Id$-isotopic diffeomorphism such that $f_1= f_0 \circ \phi$. 
\end{corollary}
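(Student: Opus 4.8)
The plan is to upgrade the pointwise local cross-section from Proposition~\ref{prop:stability} into a statement about paths, using a standard compactness-and-patching argument. Since $(f_t)$ is a path in $\Morse$ with constant critical values, every $f_t$ lies in the single space $\fMorse$ (all these functions share the same critical values, namely those of $f_0$), so Proposition~\ref{prop:stability} furnishes a local cross-section for the $\DiffMId$-action near each $f_t$. Concretely, for each $t$ there is an open neighborhood $\Nbhood_t$ of $f_t$ in $\fMorse$ and a continuous map $\LocalSection_t : \Nbhood_t \to \DiffMId$ with $g = f_t \circ \LocalSection_t(g)$ for all $g\in \Nbhood_t$, and we may normalize so that $\LocalSection_t(f_t) = \Id$.

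Next I would pull these back along the path. The map $t\mapsto f_t$ is continuous from $[0,1]$ into $\fMorse$, so the preimages $\{t : f_t \in \Nbhood_s\}$ form an open cover of the compact interval $[0,1]$. By a Lebesgue-number argument I would choose a partition $0 = t_0 < t_1 < \cdots < t_m = 1$ fine enough that each subinterval $[t_{k-1},t_k]$ maps entirely into a single neighborhood $\Nbhood_{s_k}$ on which a cross-section $\LocalSection_{s_k}$ is defined. On such a subinterval, for every $t\in[t_{k-1},t_k]$ we then have $f_t = f_{s_k}\circ \LocalSection_{s_k}(f_t)$; in particular
\[
f_{t_k} = f_{s_k}\circ \LocalSection_{s_k}(f_{t_k}) \quad\text{and}\quad f_{t_{k-1}} = f_{s_k}\circ \LocalSection_{s_k}(f_{t_{k-1}}),
\]
so composing one relation with the inverse diffeomorphism of the other yields
\[
f_{t_k} = f_{t_{k-1}} \circ \Big( \LocalSection_{s_k}(f_{t_{k-1}})^{-1} \circ \LocalSection_{s_k}(f_{t_k}) \Big),
\]
and the diffeomorphism in parentheses lies in $\DiffMId$ because $\DiffMId$ is a group (the path component of $\Id$ is a subgroup). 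Write $\phi_k := \LocalSection_{s_k}(f_{t_{k-1}})^{-1}\circ \LocalSection_{s_k}(f_{t_k})\in\DiffMId$ for this ``transition'' diffeomorphism.

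Finally I would compose the transition diffeomorphisms across the partition. Telescoping the relations $f_{t_k} = f_{t_{k-1}}\circ \phi_k$ from $k=1$ to $m$ gives
\[
f_1 = f_{t_m} = f_0 \circ (\phi_1\circ \phi_2\circ \cdots \circ \phi_m),
\]
and setting $\phi := \phi_1\circ\cdots\circ\phi_m$ produces the desired $\Id$-isotopic diffeomorphism, since $\DiffMId$ is closed under composition. The only point requiring a little care — and the main obstacle to making the argument fully rigorous — is verifying that the neighborhoods $\Nbhood_t$ are genuinely open in $\fMorse$ and that $t\mapsto f_t$ is continuous into $\fMorse$ with the Whitney $C^\infty$ topology (rather than merely into $\Morse$), so that the Lebesgue-number step applies; this is where one leans on the hypothesis that the whole path has constant critical values, which is exactly what keeps it inside the single stratum $\fMorse$ on which Proposition~\ref{prop:stability} is valid. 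Everything else is the routine bookkeeping of concatenating local sections along a compact parameter interval.
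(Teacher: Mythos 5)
Your argument is correct and is essentially the same as the paper's proof: both invoke the local cross-sections of Proposition~\ref{prop:stability} along the path, use compactness of $[0,1]$ to reduce to finitely many neighborhoods, and conclude by composing the resulting transition diffeomorphisms inside the group $\DiffMId$. The paper states this more tersely, while you spell out the Lebesgue-number step and the explicit form $\LocalSection_{s_k}(f_{t_{k-1}})^{-1}\circ\LocalSection_{s_k}(f_{t_k})$ of the transition maps, but there is no substantive difference.
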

\begin{proof}
By Proposition~\ref{prop:stability}, each~$t\in[0,1]$ has a neighborhood~$\I_t\subseteq [0,1]$ such that~$f_h$ can be written~$f_h=f_t\circ \phi_h$ whenever~$h\in \I_t$. By compactness~$[0,1]$ is covered by finitely many such intervals. Therefore~$f_1$ equals~$f_0\circ \phi$, where~$\phi$ is a finite composition of diffeomorphisms in~$\DiffMId$ hence is itself in~$\DiffMId$.
\end{proof}
\section{Covering the fiber with diffeomorphisms isotopic to the identity}
\label{sec:morse_fiber}
Given~$f\in \Morse$, we get a nested sequence of sub level-sets~$f^{-1}((-\infty,x])$. In turn, by applying homology in degree~$0\leq k \leq d$ with coefficients in an arbitrary field, we get the {\em persistent homology module of~$f$}: the sequence of vector spaces~$\mathrm{H}_k(f^{-1}((-\infty,x]))$ with linear maps between them induced by inclusions, in other words a functor from the poset~$(\R,\leq)$ to finite dimensional vector spaces. The {\em barcode} of~$f$ in degree~$k$ is the isomorphism class of this functor up to natural isomorphism.
From~\citet{crawley2015decomposition} any such functor uniquely decomposes as a direct sum of functors~$\bigoplus_{(b,d)\in D} \mathbb{I}_{[b,d)}$, with $[b,d)\subseteq \R$ an interval closed on the left and open on the right (hence possibly~$d=+\infty$): each~$\mathbb{I}_{[b,d)}$ consists of $1$-dimensional vector spaces linked with identity maps on~$[b,d)$, and it is the zero vector space everywhere outside of~$[b,d)$. Therefore the barcode of~$f$, denoted by~$\persmap_k(f)$, can be equivalently described as the multi-set~$D$ of pairs~$(b,d)$ indexing this decomposition, and will be described in this way in the rest of this document. By abuse of terminology we refer to pairs~$(b,d)$ as {\em intervals} or {\em bars} of the barcode~$D=\persmap_k(f)$. Intuitively~$(b,d)$ corresponds to the appeareance of a $k$-cycle in~$f^{-1}((-\infty,b])$ that is further cancelled in~$f^{-1}((-\infty,d])$ (or persists forever if~$d=\infty$). We refer to~\citet{edelsbrunner2008persistent,zomorodian2005computing} for extensive treatments of the theory of Persistence.

In this work the persistence map is defined on Morse functions and returns the~$d+1$ barcodes of interest:
\[\persmap: f\in \Morse \longmapsto [\persmap_0(f),\cdots, \persmap_d(f)]\in \Barc^{d+1}.\]
We assume that~$\Barc$ is equipped with its natural {\em bottleneck metric} which turns~$\persmap$ into a continuous map by the Stability Theorem~\citep{cohen2007stability}. Given a barcode~$D$ and a Morse function~$f\in \Morse$ such that~$\persmap(f)=D$, we denote by~$\CompFiber$ the path connected component of the fiber~$\persmap^{-1}(D)\subseteq \Morse$ containing~$f$.
\begin{theorem}
\label{theorem_fiber_equal_orbit}
Let~$D$ be a barcode and~$f\in \persmap^{-1}(D)$. Then~$\CompFiber= \OrbId{}$.
\end{theorem}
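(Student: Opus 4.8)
The plan is to prove the two inclusions $\OrbId \subseteq \CompFiber$ and $\CompFiber \subseteq \OrbId$ separately, using Corollary~\ref{corollary_path_of_functions_same_critical_values} as the main engine for the harder direction.

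The inclusion $\OrbId \subseteq \CompFiber$ should be the easy one. First I would check that $\OrbId$ lies in the fiber at all, i.e. that $\persmap(f \circ \phi) = \persmap(f)$ for every $\phi \in \DiffMId$. This holds because pre-composing by a diffeomorphism only relabels the points of~$\M$, so the sub level-sets $(f\circ\phi)^{-1}((-\infty,x]) = \phi^{-1}(f^{-1}((-\infty,x]))$ are homeomorphic to those of~$f$ compatibly with the inclusions, inducing an isomorphism of persistence modules and hence the same barcode~$D$. Next I would argue that $\OrbId$ is path connected and contains~$f$, so it sits inside a single path component of the fiber, namely~$\CompFiber$: given $\phi \in \DiffMId$, by definition there is an isotopy $(\phi_t)_{t\in[0,1]}$ from $\Id$ to $\phi$ through diffeomorphisms, and then $t \mapsto f \circ \phi_t$ is a continuous path in the fiber from~$f$ to~$f\circ\phi$.

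The reverse inclusion $\CompFiber \subseteq \OrbId$ is where the real content lies. Let $g \in \CompFiber$; then there is a continuous path $(g_t)_{t\in[0,1]}$ inside the fiber $\persmap^{-1}(D)$ with $g_0 = f$ and $g_1 = g$. I want to apply Corollary~\ref{corollary_path_of_functions_same_critical_values} to conclude $g = f \circ \phi$ for some $\phi \in \DiffMId$, which is exactly membership in~$\OrbId$. The corollary applies to paths of Morse functions \emph{with the same critical values}, so the crux is to verify that all the $g_t$ share the same critical values. The point is that the collection of critical values of a Morse function is read off from the barcode: the finite endpoints of the bars of $D = \persmap(g_t) = [\persmap_0(g_t),\dots,\persmap_d(g_t)]$ are precisely the critical values of~$g_t$ (each critical point creates or destroys a homology class at its critical value, and since $D$ is fixed along the path, the multiset of finite endpoints, together with the prescribed boundary values, is constant). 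Hence every $g_t$ has critical values contained in a fixed finite set determined by~$D$, so the critical values do not vary along the path and the hypothesis of the corollary is met.

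I expect the main obstacle to be the final point: justifying rigorously that the critical values are constant along the path and that this is enough to invoke the corollary. One must be a little careful, because a priori the number of critical points at a given value, or the pairing between critical points and bars, could shuffle as~$t$ varies, even though the \emph{set} of critical values is pinned down by the barcode endpoints. What saves the argument is that the corollary only requires the critical \emph{values} (as a set, or with the guarantee that the path never leaves the Morse stratum with those values) to coincide, not a fixed correspondence of critical points; since the endpoints of the bars are exactly the critical values and these endpoints are fixed by $\persmap(g_t) \equiv D$, the path $(g_t)$ genuinely lies in the space of Morse functions with fixed critical values, and Corollary~\ref{corollary_path_of_functions_same_critical_values} then delivers the required $\Id$-isotopic diffeomorphism $\phi$ with $g = f \circ \phi$. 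Combining the two inclusions gives $\CompFiber = \OrbId$.
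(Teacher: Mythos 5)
Your proposal is correct and follows essentially the same route as the paper: the easy inclusion via the isotopy $t\mapsto f\circ\phi_t$ acting on sub level-sets, and the hard inclusion by reading the critical values off the (constant) barcode endpoints and invoking Corollary~\ref{corollary_path_of_functions_same_critical_values}. The only point you gloss over, which the paper spells out, is the bookkeeping when $\bM\neq\emptyset$: boundary components that are local minima contribute the value $\fbM_j$ to the endpoint multiset with multiplicity $\sum_i\beta_i(\bM_j)$, and one uses that the min/max status of each $\bM_j$ is constant along a path in $\Morse$.
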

\begin{proof}
Let~$(\phi_t)_{0\leq t \leq 1}$ be a path in~$\DiffMId$. Each~$\phi_t$ restricts to a homeomorphism between the sub level-sets of~$f\circ \phi_t$ and~$f$, hence it induces an isomorphism between the associated persistent homology modules. In turn~$\persmap(f\circ\phi_t)=\persmap(f)$, so that~$(f\circ\phi_t)_{0\leq t \leq 1}$ is a path in the fiber~$\CompFiber$, which implies~$\OrbId{}\subseteq \CompFiber$. 

Conversely let~$g\in \CompFiber$ and let~$(f_t)_{0\leq t \leq 1}$ be a path in the fiber~$\CompFiber$ joining~$f$ to~$g$, thus~$\persmap(f_t)=D$ for each~$t$. As is well-known, when~$\M$ has no boundary there is a one-to-one correspondence between the set~$\mathcal{D}$ of (bounded) interval endpoints in the barcode and the set~$\mathcal{C}$ of critical values (counted with multiplicity) for Morse functions because the associated persistent homology module and Morse-Smale complex are isomorphic~\citep{barannikov1994framed}, see also~\citep[Proposition~2.14]{leygonie2021framework} for a self-contained proof.

When~$\M$ has a boundary the correspondence adapts by adding in~$\mathcal{C}$ the value~$\fbM_j$ with multiplicity~$\sum_i \beta_i(\bM_j)$ for each boundary component~$\bM_j$ that is a local minimum. Note that a Morse function is constant on~$\bM_j$ and has no critical points there, so either it has~$\bM_j$ as a local minimum or as a local maximum, and this choice is fixed inside a path connected component of~$\Morse$. 

Therefore each~$f_t$ has the same critical values as~$f$, because the barcode~$\persmap(f_t)=D$ is constant. By corollary~\ref{corollary_path_of_functions_same_critical_values} there exists an $\Id$-isotopic diffeomorphism~$\phi$ such that~$g=f\circ \phi$. Consequently~$\CompFiber \subseteq \OrbId{}$.
\end{proof}

\section{Topological properties of the fiber}

We derive direct consequences of Theorem~\ref{theorem_fiber_equal_orbit} combined with the extensive study of~$\OrbId{}$ by~\citet{maksymenko2006homotopy}. 
Strictly speaking, it is~$\mathcal{O}_f(f)$, the path component of~$\Orb{}$~containing $f$, whose properties are studied in~\citet{maksymenko2006homotopy}. However, there is an obvious inclusion $\OrbId{}\subseteq \mathcal{O}_f(f)$, and the reverse inclusion holds as well by corollary~\ref{corollary_path_of_functions_same_critical_values}. Therefore~$\mathcal{O}_f(f)=\OrbId{}$.

Denote by~$\LevM{}$ the subspace of $\Id$-isotopic diffeomorphisms~$\phi$ preserving a Morse function~$f$, i.e.~$f\circ \phi=f$.\footnote{In~\citep{maksymenko2006homotopy} the notation~$\LevM{}$ rather stands for the space of diffeomorphisms~$\phi$ preserving~$f$ that are isotopic to~$\mathrm{Id}_{\M}$  \emph{though} maps preserving~$f$, thus it is the path connected component of~$\mathrm{Id}_{\M}$ in our~$\LevM{}$.}
\begin{proposition}
\label{theorem_fibration_orbit_stabiliser_diffeos}
Assume that~$\M$ is connected. Let~$D$ be a barcode and~$f\in \persmap^{-1}(D)$. Then the action of~$\DiffMId$ on~$\CompFiber$ defines a locally trivial principal~$\LevM$-fibration.
\end{proposition}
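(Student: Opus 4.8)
The plan is to show that the map $\pi \colon \DiffMId \to \CompFiber$ defined by $\phi \mapsto f \circ \phi$ is a locally trivial principal $\LevM$-fibration. First I would observe that $\DiffMId$ is a topological group acting on $\CompFiber$ by pre-composition: for $\phi \in \DiffMId$ and $g = f \circ \psi \in \CompFiber = \OrbId$ (using Theorem~\ref{theorem_fiber_equal_orbit}), we set $\phi \cdot g := g \circ \phi^{-1} = f \circ (\psi \circ \phi^{-1})$, which lies in $\OrbId = \CompFiber$ since $\psi \circ \phi^{-1} \in \DiffMId$. The map $\pi$ is then the orbit map of this action at the basepoint $f$, and it is surjective precisely because $\CompFiber = \OrbId$. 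The stabiliser of $f$ under this action is exactly $\LevM = \{\phi \in \DiffMId : f \circ \phi = f\}$, so $\pi$ factors through a continuous bijection $\DiffMId / \LevM \to \CompFiber$.

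The crux is local triviality, and the key input is Proposition~\ref{prop:stability}. Every point of $\CompFiber$ lies in $\fMorse$ (all functions in the fiber share the critical values of $f$), so Proposition~\ref{prop:stability} gives the action of $\DiffMId$ on $\fMorse$ local cross-sections, and these restrict to local cross-sections for the action on the subspace $\CompFiber \subseteq \fMorse$. By Remark~\ref{remark_local_sections_implies_triviality} (Palais), the existence of local cross-sections for a group action makes the orbit map — indeed any equivariant map onto the space — locally trivial. I would spell this out: given $g_0 \in \CompFiber$, a local cross-section $s \colon \Nbhood \to \DiffMId$ with $s(g) \cdot g_0 = g$ (equivalently $g = f \circ (\dots)$ arranged so that $s(g_0) = \Id$) yields a local trivialisation $\Nbhood \times \LevM \to \pi^{-1}(\Nbhood)$, $(g, \psi) \mapsto s(g) \cdot \psi$ in the appropriate composition convention, with continuous inverse built from $s$.

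To conclude that the fibration is \emph{principal} with structure group $\LevM$, I would check that $\LevM$ acts freely on the total space $\DiffMId$ (by right multiplication, which is automatic since it is a subgroup) and that the fibres of $\pi$ are exactly the $\LevM$-cosets: $\pi(\phi) = \pi(\phi')$ iff $f \circ \phi = f \circ \phi'$ iff $\phi' \phi^{-1} \in \LevM$, so $\pi$ descends to a homeomorphism $\DiffMId / \LevM \cong \CompFiber$ compatible with the local trivialisations. The local trivialisations above are manifestly $\LevM$-equivariant for the right action, which upgrades the locally trivial fibration to a principal one.

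The main obstacle I anticipate is purely bookkeeping about conventions: making the left/right action conventions consistent so that $\pi$ is genuinely equivariant and the local cross-sections of Proposition~\ref{prop:stability} translate into local trivialisations with the correct $\LevM$-action on the fibre. The analytic heavy lifting — that local cross-sections exist — has already been discharged in Proposition~\ref{prop:stability}, and the passage from local cross-sections to local triviality is exactly the content of Palais' theorem cited in Remark~\ref{remark_local_sections_implies_triviality}; so beyond verifying connectivity of $\M$ is used only to guarantee $\DiffMId$ is a well-behaved group and that $\CompFiber$ sits inside a single $\fMorse$, no new hard estimate should be required.
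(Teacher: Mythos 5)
Your argument is correct, but it takes a genuinely different route from the paper. The paper's proof is a two-line citation: it invokes \citet[Theorem~2.1, (2)]{maksymenko2006homotopy}, which asserts that the action of the full group~$\DiffM$ on the orbit~$\Orb$ is a locally trivial principal fibration with fiber the stabiliser of~$f$; it then restricts to~$\DiffMId$ acting on~$\OrbId$ and identifies~$\OrbId$ with~$\CompFiber$ via Theorem~\ref{theorem_fiber_equal_orbit}. You instead rederive the fibration internally: you observe that~$\CompFiber=\OrbId$ sits inside~$\fMorse$, restrict the local cross-sections of Proposition~\ref{prop:stability} to~$\CompFiber$, and feed them into Palais' theorem (Remark~\ref{remark_local_sections_implies_triviality}) to get local triviality of the orbit map~$\phi\mapsto f\circ\phi$, then check that the fibers are the~$\LevM$-cosets and that the trivialisations are~$\LevM$-equivariant. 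This is a legitimate and essentially complete argument --- indeed it is close in spirit to how Maksymenko's theorem is itself proved (via Sergeraert--Poenaru-type local sections) --- and it has the virtue of making the paper self-contained, showing that the Cerf-based machinery already developed in Section~\ref{sec:morse_functions} suffices. What the paper's route buys is brevity and the stronger statement for the full group~$\DiffM$ on~$\Orb$; what your route buys is independence from the external reference and a transparent link between Proposition~\ref{prop:stability} and the bundle structure. The only points to tidy are the ones you already flag: fixing a consistent left/right convention so that~$\pi$ is equivariant and the cosets come out on the correct side (with~$\pi(\phi)=f\circ\phi$ the fibers are the cosets~$\LevM\circ\phi$ and the trivialisation is~$(g,\psi)\mapsto\psi\circ s(g)$), and noting that the cross-sections of Proposition~\ref{prop:stability} are already valued in~$\DiffMId$, so no separate argument is needed to stay in the identity component.
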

\begin{proof}
From~\citet[Theorem~2.1, (2)]{maksymenko2006homotopy} the action of diffeomorphisms~$\DiffM$ on~$\Orb$ defines a locally trivial principal fibration with fiber the diffeomorphisms~$\phi$ satisfying~$f\circ \phi =f$. Restricting to the action of~$\DiffMId$ on~$\OrbId$ defines a locally trivial principal~$\LevM$-fibration, and~$\CompFiber$ equals~$\OrbId{}$ by Theorem~\ref{theorem_fiber_equal_orbit}.
\end{proof}
\begin{remark}
\label{remark_LES}
The principal bundle~$\LevM\rightarrow \DiffMId\rightarrow \CompFiber$ has computationally useful and direct implications. First, it is a locally trivial fibration hence it induces a homotopy long exact sequence:
\begin{alignat*}{2}
\cdots & \to \pi_{n}(\LevM)\to \pi_{n}(\DiffMId) & \to \pi_{n}(\CompFiber)\to \pi_{{n-1}}(\LevM) \to\cdots  \to \pi_{0}(\DiffMId).         
\end{alignat*}
Second, we have the homeomorphism: 
\[\CompFiber\cong \DiffMId/\LevM.\]
\end{remark}
We apply this result to compute the path components of the fiber~$\persmap^{-1}(D)$ when~$\M$ is a circle:
\begin{proposition}
\label{prop_circle_morse}
Assume~$\M=\mathbb{S}^1$. Let~$D$ be a barcode and~$f\in \persmap^{-1}(D)$. Then~$\CompFiber$ is homotopy equivalent to~$\mathbb S^1$.  
\end{proposition}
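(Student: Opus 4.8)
The plan is to feed the principal bundle $\LevM\to\DiffSId\to\CompFiber$ of Proposition~\ref{theorem_fibration_orbit_stabiliser_diffeos}, together with the homeomorphism $\CompFiber\cong\DiffSId/\LevM$ of Remark~\ref{remark_LES}, into two classical facts about the circle. First I would recall that $\mathrm{Diff}(\mathbb{S}^1)$ has exactly two path components and that its identity component $\DiffSId$, the orientation-preserving diffeomorphisms, deformation retracts onto the rotation subgroup $\mathrm{SO}(2)\cong\mathbb{S}^1$. Hence $\DiffSId\simeq\mathbb{S}^1$, with $\pi_1(\DiffSId)\cong\Z$ generated by the full rotation loop $t\mapsto(\theta\mapsto\theta+2\pi t)$ and $\pi_n(\DiffSId)=0$ for all $n\geq 2$.

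Next I would prove that the stabilizer $\LevM$ is a \emph{finite} cyclic group. The point is that $\LevM$ is discrete: if $(\phi_s)$ were a path in $\LevM$ with $\phi_0=\Id$, then each $\phi_s$ would permute the finite critical set $\Crit(f)$ while fixing critical values, so by continuity $\phi_s$ fixes $\Crit(f)$ pointwise; on each arc between consecutive critical points $f$ is strictly monotone, hence injective, and $f\circ\phi_s=f$ forces $\phi_s=\Id$ on that arc. Thus the identity component of $\LevM$ is trivial, and since the possible combinatorial actions on the cyclically ordered critical points form a subgroup of a finite cyclic group, $\LevM\cong\Z/m$ for some $m\geq 1$ (with $m>1$ exactly when $f$ has a nontrivial rotational symmetry, e.g.\ $f=\cos(m\theta)$). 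Because the $\LevM$-action is free and $\LevM$ is finite, the bundle map $\DiffSId\to\CompFiber$ is a regular $m$-fold covering of connected spaces. Consequently $\pi_n(\CompFiber)\cong\pi_n(\DiffSId)=0$ for $n\geq 2$, and there is a short exact sequence
\[
1\longrightarrow \pi_1(\DiffSId)\cong\Z \longrightarrow \pi_1(\CompFiber)\longrightarrow \Z/m\longrightarrow 1,
\]
the quotient being the deck group $\LevM$.

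It remains to identify $\pi_1(\CompFiber)$, and this is the step I expect to require the most care, since a priori the above extension could split to give $\Z\times\Z/m$. I would resolve it by lifting a generator $\gamma$ of $\pi_1(\CompFiber)$ to a path in $\DiffSId$ from $\Id$ to a generator $g$ of $\LevM$, so that $\gamma^m$ lifts to the concatenated loop $\Id\to g\to\cdots\to g^m=\Id$. Passing to the universal cover $\widetilde{\DiffSId}$ (lifts to $\R$ of circle diffeomorphisms, with deck group $\Z=\pi_1(\DiffSId)$ acting by integer translations), the endpoint of the lift of $\gamma^m$ is the integer translation by the rotation number multiple determined by $g$. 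Since $g$ is an orientation-preserving diffeomorphism of exact order $m$, it is conjugate to a rotation by $j/m$ with $\gcd(j,m)=1$, so $\gamma^m$ equals $j$ times the generator of $\pi_1(\DiffSId)$. The extension being central (hence abelian), $\pi_1(\CompFiber)\cong\Z^2/\langle(m,-j)\rangle$, and coprimality of $(m,j)$ makes this quotient $\cong\Z$, ruling out torsion. Finally, since $\CompFiber=\DiffSId/\LevM$ is an (infinite-dimensional) manifold with the homotopy type of a CW complex, is connected by definition, and has $\pi_1\cong\Z$ with all higher homotopy vanishing, it is a $K(\Z,1)$ and therefore homotopy equivalent to $\mathbb{S}^1$.
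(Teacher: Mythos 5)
Your proposal is correct, and it shares the paper's skeleton: both pass through Proposition~\ref{theorem_fibration_orbit_stabiliser_diffeos} to identify $\CompFiber$ with $\DiffSId/\LevM$, both use the deformation retraction $\DiffSId\simeq \mathrm{SO}(2)$, and both reduce to the observation that $\LevM$ is a finite cyclic group (your discreteness argument via monotonicity of $f$ on the arcs between critical points is a good, explicit version of what the paper leaves implicit). Where you diverge is the final identification. The paper normalizes the situation geometrically: after conjugating so that the critical points are evenly spaced, $\LevM$ sits inside the rotation subgroup, is preserved by the retraction onto $\mathrm{SO}(2)$, and the quotient of $\mathbb{S}^1$ by a finite rotation group is again $\mathbb{S}^1$ --- a two-line conclusion. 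You instead treat $\DiffSId\to\CompFiber$ as an $m$-fold covering, kill the higher homotopy groups, and resolve the extension $1\to\Z\to\pi_1(\CompFiber)\to\Z/m\to 1$ by a translation-number computation in the universal cover, concluding via the $K(\Z,1)$ characterization. Your route is longer and carries two small justification debts that the paper's route avoids: centrality of the extension (true, but it needs the remark that the deck transformations are right translations in the topological group $\DiffSId$, hence homotopic to the identity with track a lift of $\gamma$), and the fact that $\CompFiber$ has the homotopy type of a CW complex. Also, strictly speaking $\gamma^m$ equals $k$ times the generator with $k\equiv j\pmod m$ rather than $j$ itself, but coprimality with $m$ is all you use, so the conclusion stands. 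What your approach buys is that it sidesteps the paper's informal step of conjugating $\LevM$ into $\mathrm{SO}(2)$ compatibly with the retraction, and it yields the homotopy groups of $\CompFiber$ along the way; what it costs is the extension analysis, which the paper's direct quotient argument renders unnecessary.
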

\begin{proof}
From Proposition~\ref{theorem_fibration_orbit_stabiliser_diffeos}~$\CompFiber$ is homeomorphic to~$\DiffSId/ \LevM$. Let~$n$ be the number of minima of~$f$, which is then also the number of maxima of~$f$ because~$\chi(\mathbb S^1)=0$. Without loss of generality we assume that the associated~$2n$ critical points of~$f$ are evenly spaced on~$\mathbb{S}^1$.
The space~$\DiffSId$ of $\Id$-isotopic diffeomorphisms of the circle deformation retracts to~$\mathbb{S}^1$, i.e. the rotations of the circle. The subgroup~$\LevM$ of $\Id$-isotopic diffeomorphisms~$\phi$ preserving~$f$, that is $f\circ \phi=f$, is then (isomorphic to) the subgroup of rotations consisting of the~$2n$-th roots of unity that preserve the sequence of extremal values of~$f$. The result follows since the quotient of~$\mathbb S^1$ by a finite subgroup is again~$\mathbb S^1$. 
\end{proof}
When~$\M=[0,1]$ recall that Morse functions have prescribed values~$\fbM_0$ and~$\fbM_1$ on the boundary points~$0$ and~$1$. 
\begin{proposition}
\label{prop_interval_morse}
Assume~$\M=[0,1]$. Let~$D$ be a barcode and~$f\in \persmap^{-1}(D)$. Then~$\CompFiber$ is contractible.
\end{proposition}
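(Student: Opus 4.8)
The plan is to follow the same strategy as in the circle case (Proposition~\ref{prop_circle_morse}), using the principal fibration of Proposition~\ref{theorem_fibration_orbit_stabiliser_diffeos} to identify~$\CompFiber$ with a quotient~$\DiffIId/\LevM$, and then to show this quotient is contractible by analyzing the relevant diffeomorphism and stabilizer groups. First I would invoke the homeomorphism~$\CompFiber\cong \DiffIId/\LevM$ from Remark~\ref{remark_LES}, where~$\DiffIId$ denotes the~$\Id$-isotopic diffeomorphisms of~$[0,1]$; since the boundary values~$\fbM_0,\fbM_1$ are prescribed and fixed within a path component of~$\Morse$, every diffeomorphism in play must fix the endpoints~$0$ and~$1$, so~$\DiffIId$ consists precisely of the orientation-preserving diffeomorphisms of~$[0,1]$ fixing both endpoints.

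The key observation is that~$\DiffIId$ is \emph{itself contractible}. Indeed, an orientation-preserving self-diffeomorphism of~$[0,1]$ fixing the endpoints is just a strictly increasing smooth function~$\phi$ with~$\phi(0)=0$, $\phi(1)=1$, and~$\phi'>0$; the space of such maps is convex in the affine sense that the straight-line homotopy~$\phi_s=(1-s)\phi+s\cdot\Id$ stays inside~$\DiffIId$ (each~$\phi_s$ still fixes the endpoints, remains smooth, and has~$\phi_s'=(1-s)\phi'+s>0$), giving a deformation retraction of~$\DiffIId$ onto~$\{\Id\}$. The next step is to understand the stabilizer~$\LevM$: a diffeomorphism~$\phi\in\DiffIId$ with~$f\circ\phi=f$ must permute the critical points of~$f$, but since~$\phi$ is orientation-preserving and fixes the endpoints, and since the critical values determine the combinatorial ordering of critical points along~$[0,1]$, such a~$\phi$ must fix each critical point. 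Combined with contractibility of the diffeomorphism group of each subinterval between consecutive critical points (by the same convexity argument applied piecewise), I expect~$\LevM$ to be contractible as well.

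With both~$\DiffIId$ and~$\LevM$ contractible, I would conclude that the quotient~$\CompFiber\cong\DiffIId/\LevM$ is contractible by appealing to the long exact sequence of the fibration in Remark~\ref{remark_LES}: all homotopy groups of~$\DiffIId$ and~$\LevM$ vanish, so all~$\pi_n(\CompFiber)$ vanish, and since~$\CompFiber$ is (as a quotient of a contractible space by a group action with local cross-sections) a reasonable space, vanishing homotopy groups yields weak contractibility, which I would upgrade to genuine contractibility. Alternatively, and more cleanly, since~$\LevM$ acts on the contractible total space~$\DiffIId$ with local cross-sections (so the fibration is locally trivial with contractible fiber), the base~$\DiffIId/\LevM$ inherits a contraction directly.

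The main obstacle I anticipate is the rigidity argument for~$\LevM$, namely verifying carefully that every~$f$-preserving~$\Id$-isotopic diffeomorphism fixes each critical point rather than merely permuting them among equal critical values, and then assembling the piecewise contractions on the subintervals into a single continuous deformation of~$\LevM$ that respects the smoothness and matching conditions at the critical points. A subtlety worth flagging is that a~$\phi$ preserving~$f$ could in principle swap two intervals on which~$f$ takes the same monotone profile; the orientation-preserving and endpoint-fixing constraints, together with the fact that the ordering of extremal values is preserved, are exactly what rule this out in the interval case (in contrast to the circle, where the cyclic symmetry did allow a nontrivial finite stabilizer and produced~$\mathbb{S}^1$ rather than a point).
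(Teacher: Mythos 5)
Your proposal is correct and follows essentially the same route as the paper: both identify~$\CompFiber$ with~$\DiffIId/\LevM$ via Proposition~\ref{theorem_fibration_orbit_stabiliser_diffeos} and contract~$\DiffIId$ to~$\mathrm{Id}_{[0,1]}$ by straight-line interpolation. The one imprecision is your treatment of~$\LevM$: it is not assembled from diffeomorphism groups of the subintervals between consecutive critical points but is literally the trivial group~$\{\mathrm{Id}_{[0,1]}\}$, because~$f$ is strictly monotone (hence injective) on each such subinterval, so~$f\circ\phi=f$ forces~$\phi=\Id$ there; this observation, which is what the paper uses, makes the quotient equal to~$\DiffIId$ itself and renders the long-exact-sequence argument and the upgrade from weak to genuine contractibility unnecessary.
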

\begin{proof}
From Proposition~\ref{theorem_fibration_orbit_stabiliser_diffeos}~$\CompFiber$ is homeomorphic to~$\DiffIId/ \LevM$. However~$\DiffIId$ deformation retracts on the identity diffeomorphism~$\mathrm{Id}_{[0,1]}$ by straight-line interpolations, and~$\LevM=\{\mathrm{Id}_{[0,1]}\}$.  
\end{proof}

Note that we could easily derive a similar statement for Morse functions on~$[0,1]$ without boundary conditions. In Section~\ref{appendix_interval_circle} we prove the analogues of Propositions~\ref{prop_circle_morse} and~\ref{prop_interval_morse} for continuous functions. The analogues for lower-star filtrations on the subdivided interval and circle have been proved by \citet{cyranka2018contractibility} and \citet{mischaikow2021persistent} respectively.
\begin{remark}
\label{remark_connected}
When~$\M=\M_1\sqcup \M_2$ has more than one connected component, the path component~$\CompFiber$ in the fiber over~$D=\persmap(f)$ can be retrieved as the product of the path components of the fibers over~$D_1:=\persmap(f_{|\M_1})$ and~$D_2:=\persmap(f_{|\M_2})$ containing the restrictions~$f_{|\M_1}$ and~$f_{|\M_2}$ respectively:
\[\CompFiber= \persmap_{f_{|\M_1}}^{-1}(D_1)\times \persmap_{f_{|\M_2}}^{-1}(D_2).\]
For this reason we focus our analysis to the interesting case where~$\M$ is connected.
\end{remark}
For the rest of the section we fix a compact connected surface~$\M$ and a function~$f$ with barcode~$D$, whose number of critical points of index~$1$ is denoted by~$c_1$. We make use of the analysis of the orbit~$\OrbId{}$ by~\citet{maksymenko2006homotopy}.
\begin{proposition}
\label{theorem_homotopy_type_surface_saddles_homotopy_groups}
Assume that~$c_1>0$. Then~$\pi_2(\CompFiber)=0$ and~$\pi_n(\CompFiber)=\pi_n(\M)$ for~$n\geq 3$. %
\end{proposition}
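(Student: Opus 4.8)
The plan is to feed the principal fibration $\LevM \to \DiffMId \to \CompFiber$ of Proposition~\ref{theorem_fibration_orbit_stabiliser_diffeos} into its homotopy long exact sequence (Remark~\ref{remark_LES}) and simply read off $\pi_n(\CompFiber)$ from $\pi_n(\DiffMId)$ and $\pi_n(\LevM)$. The whole argument is an exact-sequence computation powered by two external inputs: the homotopy type of the stabilizer in the presence of a saddle, and the homotopy type of the full diffeomorphism group of a surface.

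First I would invoke the analysis of the stabilizer by~\citet{maksymenko2006homotopy}: since $f$ has at least one saddle point ($c_1>0$), the path component of the identity in $\LevM$ — which, by the footnote, is exactly Maksymenko's space — is contractible. Higher homotopy groups only see the identity component, so $\pi_n(\LevM)=0$ for every $n\geq 1$. Substituting this into the exact sequence
\[
\cdots \to \pi_n(\LevM) \to \pi_n(\DiffMId) \to \pi_n(\CompFiber) \to \pi_{n-1}(\LevM) \to \cdots
\]
kills the two outer terms as soon as $n\geq 2$ (so that both $\pi_n(\LevM)$ and $\pi_{n-1}(\LevM)$ vanish), yielding isomorphisms $\pi_n(\CompFiber)\cong \pi_n(\DiffMId)$ for all $n\geq 2$.

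It then remains to identify $\pi_n(\DiffMId)$ for a compact connected surface, for which I would appeal to the classical determination of the homotopy type of surface diffeomorphism groups (Earle--Eells, Gramain, Hamstrom): $\DiffMId$ is homotopy equivalent to $\SpeOrthogonal$ when $\M\in\{\TwoSphere,\mathbb{R}P^2\}$, to $\Torus$ when $\M=\Torus$, and to $\mathbb{S}^1$ or to a point in every remaining case. In all cases $\pi_2(\DiffMId)=\pi_2(\SpeOrthogonal)=0$, which already gives $\pi_2(\CompFiber)=0$. For $n\geq 3$ I would check the identification $\pi_n(\DiffMId)\cong\pi_n(\M)$ case by case: every surface other than $\TwoSphere$ and $\mathbb{R}P^2$ is aspherical and has $\DiffMId$ with vanishing higher homotopy, so both sides are $0$; for $\TwoSphere$ and $\mathbb{R}P^2$ one uses $\DiffMId\simeq\SpeOrthogonal\simeq\mathbb{R}P^3$ together with the double covers $\mathbb{S}^3\to\mathbb{R}P^3$, $\TwoSphere\to\mathbb{R}P^2$ and the Hopf fibration $\mathbb{S}^1\to\mathbb{S}^3\to\TwoSphere$ to conclude $\pi_n(\SpeOrthogonal)\cong\pi_n(\mathbb{S}^3)\cong\pi_n(\TwoSphere)\cong\pi_n(\mathbb{R}P^2)$ for $n\geq 3$. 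Combining with the previous paragraph gives $\pi_n(\CompFiber)=\pi_n(\M)$ for $n\geq 3$.

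The only genuinely delicate point is the mismatch in degree two: for $\M=\TwoSphere$ or $\mathbb{R}P^2$ one has $\pi_2(\M)=\Z$ whereas $\pi_2(\DiffMId)=\pi_2(\SpeOrthogonal)=0$, which is precisely why the statement isolates $\pi_2(\CompFiber)=0$ instead of asserting $\pi_2(\CompFiber)=\pi_2(\M)$. The hard part is therefore not the exact-sequence bookkeeping but correctly importing the two inputs and, in particular, pinning down the comparison between $\SpeOrthogonal$ and $\TwoSphere$ in every degree.
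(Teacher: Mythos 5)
Your argument is correct, but it takes a genuinely different route from the paper's. The paper's proof is a one-line citation: after identifying $\CompFiber$ with $\OrbId$ via Theorem~\ref{theorem_fiber_equal_orbit}, it reads $\pi_2(\OrbId)=0$ and $\pi_n(\OrbId)=\pi_n(\M)$ for $n\geq 3$ directly off part (2) of Maksymenko's Theorem~1.5, which already states the homotopy groups of the orbit. You instead unpack how such a statement is obtained: you feed the principal fibration $\LevM\to\DiffMId\to\CompFiber$ of Proposition~\ref{theorem_fibration_orbit_stabiliser_diffeos} into the long exact sequence of Remark~\ref{remark_LES}, import from Maksymenko only the contractibility of the identity component of the stabilizer in the presence of a saddle (so $\pi_n(\LevM)=0$ for $n\geq 1$, and hence $\pi_n(\CompFiber)\cong\pi_n(\DiffMId)$ for $n\geq 2$, since the isomorphism in degree $n$ needs both $\pi_n(\LevM)$ and $\pi_{n-1}(\LevM)$ to vanish --- your bookkeeping here is right), and then compare $\pi_n(\DiffMId)$ with $\pi_n(\M)$ using the classical homotopy types of surface diffeomorphism groups (Smale, Earle--Eells, Gramain, Hamstrom) together with the covers $\mathbb{S}^3\to\SpeOrthogonal$, $\TwoSphere\to\mathbb{R}P^2$ and the Hopf fibration. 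This is essentially Maksymenko's own derivation of his Theorem~1.5(2), so what you gain is transparency --- in particular your computation makes visible exactly why degree two is exceptional ($\pi_2(\SpeOrthogonal)=0$ versus $\pi_2(\TwoSphere)=\Z$) --- at the cost of a second external input (the classification of surface diffeomorphism groups) and a case-by-case check over all compact connected surfaces. Both imported facts are correctly stated, so the proof goes through.
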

\begin{proof}
$\CompFiber=\OrbId$ by Theorem~\ref{theorem_fiber_equal_orbit}, and by~\citet[(2), Theorem~1.5]{maksymenko2006homotopy} we have~$\pi_2(\OrbId)=0$ and~$\pi_n(\OrbId)=\pi_n(\M)$ for~$n\geq 3$.
\end{proof}
\begin{remark}
\label{remark_fundamental_group}
From~\citet[(2), Theorem~1.5]{maksymenko2006homotopy} we can also derive a short exact sequence~$0\to \pi_1(\DiffMId)\oplus \Z^{k_f} \to \pi_1(\CompFiber)\to G \to 0$ where~$G$ is a finite group and the integer~$k_f\geq 0$ depends on the component~$\CompFiber$ in the fiber, on the number~$c_1$ of saddles and the surface~$\M$.
\end{remark}
\begin{proposition}
\label{theorem_homotopy_type_surface_no_saddles}
Assume that~$c_1=0$. Then the homotopy type of the fiber~$\CompFiber$ is classified as follows: %
\begin{center}
 \begin{tabular}{||c | c |c| c||} 
 \hline
 {\bf Surface} $\M$ & $\TwoSphere$ &  $\mathbb{S}^1\times \I$ &  $\mathbb{D}^2$ \\ [0.5ex] 
\hline 
{\bf Fiber} $\CompFiber$  & $\TwoSphere$ & $\{*\}$ & $\{*\}$\\ [0.5ex] 
  \hline
\end{tabular}
\end{center}
\end{proposition}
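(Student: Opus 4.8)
The plan is to exploit the homeomorphism $\CompFiber\cong \DiffMId/\LevM$ of Remark~\ref{remark_LES} (together with $\CompFiber=\OrbId$ from Theorem~\ref{theorem_fiber_equal_orbit}) and reduce everything to classical computations of diffeomorphism groups. First I would record what $c_1=0$ forces. A sublevel-set/handle analysis shows that the only compact connected surfaces admitting a saddle-free Morse function are $\TwoSphere$, $\mathbb{D}^2$ and $\mathbb{S}^1\times\I$ (the Euler characteristic and the $\mathbb{Z}/2$-Morse inequalities rule out $\mathbb{RP}^2$, the M\"obius band, higher genus, and more boundary components, all of which force $c_1\geq 1$), and that on these surfaces $f$ has the simplest possible shape: one minimum and one maximum on $\TwoSphere$; one boundary extremum plus a single interior extremum on $\mathbb{D}^2$; and no interior critical point at all (one boundary minimum, one boundary maximum) on $\mathbb{S}^1\times\I$. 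In every case the regular levels of $f$ are circles, so away from its critical points $f$ is modelled on the projection $\mathbb{S}^1\times J\to J$ onto an interval $J$ of regular values.

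The core step is to show that in all three cases the stabiliser is homotopy equivalent to the circle of rotations, $\LevM\simeq\mathbb{S}^1$. The idea is that a level-preserving $\Id$-isotopic diffeomorphism is precisely a smooth family $(\psi_t)_{t\in J}$ of orientation-preserving circle diffeomorphisms, one per regular level, subject to a smoothness (closing-up) condition where the levels degenerate at the critical points. This exhibits $\LevM$ as, up to homotopy, the space of maps $J\to\mathrm{Diff}^+(\mathbb{S}^1)$; since $J$ is contractible and $\mathrm{Diff}^+(\mathbb{S}^1)\simeq\mathbb{S}^1$, one obtains $\LevM\simeq\mathbb{S}^1$, the generating class being detected by the loop that rotates every level through a full turn. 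Checking that the degeneracy conditions at the poles (resp. the interior extremum) do not alter this homotopy type is the main obstacle; alternatively one may invoke the stabiliser computations of~\citet{maksymenko2006homotopy} directly.

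It then remains to identify $\DiffMId$ and form the quotient. For $\TwoSphere$, Smale's theorem gives $\DiffSTwoId\simeq\SpeOrthogonal$, and the inclusion $\LevM\hookrightarrow\DiffSTwoId$ models $\mathrm{SO}(2)\hookrightarrow\SpeOrthogonal$; comparing the orbit fibration $\LevM\to\DiffSTwoId\to\CompFiber$ of Proposition~\ref{theorem_fibration_orbit_stabiliser_diffeos} with the homogeneous fibration $\mathrm{SO}(2)\to\SpeOrthogonal\to\TwoSphere$, via the five lemma on long exact sequences and Whitehead's theorem, yields $\CompFiber\simeq\SpeOrthogonal/\mathrm{SO}(2)=\TwoSphere$. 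For $\mathbb{D}^2$, Smale again gives $\mathcal{D}_{\Id}(\mathbb{D}^2)\simeq\mathrm{SO}(2)\simeq\mathbb{S}^1$; for $\mathbb{S}^1\times\I$, restricting a diffeomorphism to the two boundary circles gives a fibration whose fibre (diffeomorphisms fixing $\partial$) has $\pi_0=\mathbb{Z}$ generated by the Dehn twist, and the resulting long exact sequence collapses $\DiffMId$ onto the diagonal rotation, so $\mathcal{D}_{\Id}(\mathbb{S}^1\times\I)\simeq\mathbb{S}^1$. In both of these cases the inclusion $\LevM\hookrightarrow\DiffMId$ is a homotopy equivalence (it is an isomorphism on $\pi_1=\mathbb{Z}$ and both sides are $K(\mathbb{Z},1)$'s), whence $\CompFiber\cong\DiffMId/\LevM$ is weakly contractible, hence contractible.

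The delicate points I expect are exactly the two equivalences feeding the quotients: that $\LevM\simeq\mathbb{S}^1$ survives the smoothness constraints at the critical points, and that the inclusion $\LevM\hookrightarrow\DiffMId$ realises the correct map on $\pi_1$ — the reduction $\mathbb{Z}\twoheadrightarrow\mathbb{Z}/2$ for $\TwoSphere$ (a full turn of all levels is a full rotation of the sphere, the nontrivial class of $\pi_1(\SpeOrthogonal)$), versus an isomorphism $\mathbb{Z}\xrightarrow{\sim}\mathbb{Z}$ for the disc and the annulus. Getting this $\pi_1$-behaviour right is precisely what separates the spherical answer $\TwoSphere$ from the two contractible ones.
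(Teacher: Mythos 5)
Your argument is correct in substance but follows a genuinely different route from the paper. The paper's proof is a two-line citation: $\CompFiber=\OrbId$ by Theorem~\ref{theorem_fiber_equal_orbit}, and the homotopy type of~$\OrbId$ in the saddle-free case is read off from Theorem~1.9 of \citet{maksymenko2006homotopy}. You instead reconstruct the answer from the orbit fibration $\LevM\to\DiffMId\to\CompFiber$ of Proposition~\ref{theorem_fibration_orbit_stabiliser_diffeos} and Remark~\ref{remark_LES}, classifying the saddle-free surfaces by an Euler characteristic count, computing $\LevM\simeq\mathbb{S}^1$ level-by-level, and identifying $\DiffMId$ via Smale's theorems --- essentially expanding into a full argument what the paper only offers as an informal interpretation after the proposition (the comparison of $\LevM\to\DiffSTwoId\to\CompFiber$ with $\mathbb{S}^1\to\SpeOrthogonal\to\TwoSphere$). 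What your approach buys is self-containedness and an explicit picture of the generating classes; what it costs is that the two steps you yourself flag as delicate are precisely the nontrivial content of Maksymenko's work: (i) proving that $\LevM$ really deformation retracts to the rotations despite the degeneration of the level circles at the extrema (your ``space of maps $J\to\mathrm{Diff}^+(\mathbb{S}^1)$'' description breaks down at the critical levels, and making it rigorous is where the cited stabiliser computations are doing the work), and (ii) upgrading the $\pi_*$-isomorphisms obtained from the five lemma to genuine homotopy equivalences, which requires knowing that $\CompFiber$ has the homotopy type of a CW complex before Whitehead's theorem applies. Neither point is a fatal gap --- both are standard and you correctly offer the fallback of citing \citet{maksymenko2006homotopy} for the stabilisers --- but as written they are sketches rather than proofs, whereas the paper's citation closes them outright.
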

\begin{proof}
$\CompFiber=\OrbId$ by Theorem~\ref{theorem_fiber_equal_orbit}, and the homotopy type of~$\OrbId$ is computed by~\citet[Theorem~1.9]{maksymenko2006homotopy}.
\end{proof}
For instance the case where~$f:\TwoSphere\rightarrow \R$ has no saddle $(c_1=0)$ can be interpreted as follows: The fiber sequence~$\LevM \rightarrow \DiffSTwoId \rightarrow \CompFiber$ of Proposition~\ref{theorem_fibration_orbit_stabiliser_diffeos} can be identified up to homotopy with the standard fiber sequence $\mathbb{S}^1 \rightarrow \mathrm{SO}(3)\rightarrow  \TwoSphere$. This is because~$\DiffSTwoId$ deformation retracts to~$\mathrm{SO}(3)$ by the $2$-dimensional Smale conjecture (see ~\citet{smale1959diffeomorphisms}), and if without loss of generality we assume that~$f$ is the standard height function, then~$\LevM$ consists of those rotations fixing the poles, so~$\LevM$ is fixed by the retraction and~$\LevM\sim \mathbb{S}^1$.

\begin{proposition}
\label{theorem_homotopy_type_surface_saddles}
Assume that~$D$ has pairwise distinct bounded interval endpoints, and that~$c_1>0$. Then we have the following homotopy types for the fiber~$\CompFiber$: %
\begin{center}
 \begin{tabular}{||c | c |c| c| c| c||} 
 \hline
 {\bf Surface} $\M$ & $\TwoSphere$ & Projective Plane& Torus & $\mathbb{S}^1\times \I$ &  $\mathbb{D}^2$ \\ [0.5ex] 
\hline 
{\bf Fiber} $\CompFiber$  & $\SpeOrthogonal \times (\mathbb S^1)^{c_1-1}$ & $\SpeOrthogonal \times {(\mathbb S^1)}^{c_1-1}$ & $(\mathbb S^1)^{c_1+1}$ & $(\mathbb S^1)^{c_1}$& $(\mathbb S^1)^{c_1}$\\ [0.5ex] 
  \hline
\end{tabular}
\end{center}
When~$\M$ is obtained from the surfaces in the above tables by removing finitely many~$2$-disks, then~$\CompFiber\sim (\mathbb{S}^1)^{c_1-1}$. If~$\M$ is the Möbius strip, then~$\CompFiber\sim (\mathbb{S}^1)^{c_1}$. For other orientable surfaces~$\M$, we have~$\CompFiber\sim (\mathbb{S}^1)^{c_1+\chi(\M)}$. For the remaining non-orientable surfaces, we have~$\CompFiber\sim (\mathbb{S}^1)^{k_f}$ for some integer~$k_f\leq c_1+\chi(\M)$, unless~$\M$ is the Klein bottle in which case~$k_f\leq c_1+1$.
\end{proposition}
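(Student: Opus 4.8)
The plan is to push everything through the identification $\CompFiber=\OrbId$ of Theorem~\ref{theorem_fiber_equal_orbit}: it suffices to compute the homotopy type of the orbit $\OrbId$, and for this I would invoke the classification of \citet{maksymenko2006homotopy}, organised throughout by the principal fibration $\LevM\to\DiffMId\to\CompFiber$ of Proposition~\ref{theorem_fibration_orbit_stabiliser_diffeos} and the induced homeomorphism $\CompFiber\cong\DiffMId/\LevM$ of Remark~\ref{remark_LES}.

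First I would dispose of the higher homotopy. Because $c_1>0$, Proposition~\ref{theorem_homotopy_type_surface_saddles_homotopy_groups} gives $\pi_2(\CompFiber)=0$ and $\pi_n(\CompFiber)=\pi_n(\M)$ for $n\geq 3$. Every surface in the statement other than $\TwoSphere$ and the projective plane is aspherical, so there all higher homotopy vanishes, $\CompFiber$ is a $K(\pi_1,1)$, and once $\pi_1$ is shown to be free abelian of some rank $k_f$ we conclude $\CompFiber\sim(\mathbb{S}^1)^{k_f}$, a torus. For $\TwoSphere$ and $\mathbb{RP}^2$ the surviving homotopy is that of $\TwoSphere$, matching the $\SpeOrthogonal$ factor in the table; that this factor actually splits off as a topological product (and is not merely a coincidence of homotopy groups) I would justify through the $2$-dimensional Smale conjecture $\DiffSTwoId\sim\SpeOrthogonal$ \citep{smale1959diffeomorphisms}, exactly as in the discussion after Proposition~\ref{theorem_homotopy_type_surface_no_saddles}.

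Next I would compute $\pi_1$ using the distinct-endpoint hypothesis. Pairwise distinct bounded endpoints are pairwise distinct finite critical values, so any diffeomorphism preserving $f$ must fix each critical level and is forbidden from permuting critical points; in Maksymenko's description this trivialises the finite group $G$ of Remark~\ref{remark_fundamental_group}, and the short exact sequence there degenerates to an isomorphism $\pi_1(\CompFiber)\cong\pi_1(\DiffMId)\oplus\Z^{k_f}$. The answer in each case is then assembled by substituting the known homotopy type of $\DiffMId$: for $\TwoSphere$ and $\mathbb{RP}^2$ one uses $\DiffMId\sim\SpeOrthogonal$; for $\Torus$ one uses $\DiffMId\sim\Torus$, which contributes the extra two circles; the disk, cylinder and Möbius strip are handled likewise from the homotopy types of their diffeomorphism groups; and for every surface of negative Euler characteristic the identity component of the diffeomorphism group is contractible, so there $\CompFiber$ is exactly the torus $(\mathbb{S}^1)^{k_f}$ with no extra factor.

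The crux---and the main obstacle---is the exact free rank $k_f$ in each case, which I would extract from Maksymenko's combinatorial count of the cycles contributed by the saddles, reading off the clean values case by case: $c_1+1$ for $\Torus$, $c_1$ for the disk, cylinder and Möbius strip, $c_1-1$ for the sphere, projective plane and torus with disks removed, and $c_1+\chi(\M)$ for the remaining orientable surfaces, cross-checked against the Morse relation $c_0-c_1+c_2=\chi(\M)$. The ``removing-disks'' family has to be run as its own computation rather than folded into the $c_1+\chi(\M)$ formula, since introducing boundary components changes how the count behaves---this is precisely why it occupies a separate line. The genuinely delicate case is the non-orientable one: orientation-reversing symmetries block an exact count, so only the inequalities $k_f\leq c_1+\chi(\M)$, sharpened to $k_f\leq c_1+1$ for the Klein bottle, survive. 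I expect establishing these bounds, together with the careful separation of the punctured ``removing-disks'' surfaces from the five table entries, to absorb most of the effort.
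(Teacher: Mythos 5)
Your proposal is correct and follows essentially the same route as the paper: identify $\CompFiber$ with $\OrbId$ via Theorem~\ref{theorem_fiber_equal_orbit}, observe that pairwise distinct bounded endpoints force pairwise distinct critical values, and then read off the homotopy type of the orbit from Maksymenko's Theorem~1.5. The paper's proof is just that two-step citation; your additional scaffolding (the fibration of Proposition~\ref{theorem_fibration_orbit_stabiliser_diffeos}, the long exact sequence, the Smale conjecture, and the degeneration of the sequence in Remark~\ref{remark_fundamental_group}) is a faithful unpacking of how Maksymenko's classification is assembled, but the crux --- the exact rank $k_f$ --- is still deferred to the same external result the paper cites.
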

\begin{proof}
$\CompFiber=\OrbId$ by Theorem~\ref{theorem_fiber_equal_orbit}. Since~$D$ has distinct bounded interval endpoints,~$f$ has distinct critical points, and then the homotopy type of~$\OrbId$ is computed by~\citet[(2)\&(3), Theorem~1.5]{maksymenko2006homotopy}.
%
\end{proof}
\begin{remark}
\label{remark_saddle_points_barcode}
When~$\M$ has no boundary, $\bM=\emptyset$, the number~$c_1$ of saddles of Morse functions~$f$ in the fiber~$\persmap^{-1}(D)$ can be directly inferred from the barcode~$D$. Namely, if we denote by~$\IntervalsSurface$ the number of intervals in~$D$, then the quantity~$\IntervalsSurface-\beta_0-\beta_2$ counts (i) all the intervals~$(b,d)$ of~$D$ in degree~$1$, which correspond by their birth value~$b$ to saddle points of~$f$ whose attaching handle increases the~$1$-dimensional homology of the sub level-set~$f^{-1}((-\infty,b])$, and (ii) all the bounded intervals~$(b,d)$ of~$D$ in degree~$0$, which correspond by their death value~$d<\infty$ to saddle points of~$f$ whose attaching handle decreases the~$0$-dimensional homology of the sub level-set~$f^{-1}((-\infty,d])$. Hence~$c_1=\IntervalsSurface-\beta_0-\beta_2$. When~$\bM=\bigsqcup_j \bM_j \neq \emptyset$, we can partition the boundary components~$\bM_j$ into the sets~$\bM^{\mathrm{min}}$ (resp.~$\bM^{\mathrm{max}}$) of components~$\bM_j$ that are local minimum (resp. maximum) of one (hence any) function~$f$ in the component of~$\persmap^{-1}(D)$ at stake. Since~$\M$ is a surface each~$\bM_j$ is a circle, therefore if~$\bM_j\subseteq \bM^{\mathrm{min}}$, then it corresponds in the barcode~$D$ to the births of one interval in degree~$0$ and one interval in degree~$1$. Otherwise~$\bM_j\subseteq \bM^{\mathrm{max}}$ induces no topological change when entering the sub level-sets of~$f$. Consequently the correspondence between critical points and interval endpoints adapts and yields~$c_1=\IntervalsSurface-\beta_0-\beta_2-\#\bM^{\mathrm{min}}$.
\end{remark}
\begin{remark}
\label{remark_3_manifolds}
For manifolds~$\M$ of dimension~$3$ for which the Smale conjecture~$\DiffM \cong \mathrm{Isom}(\M)$ holds, e.g. the~$3$-sphere, lens spaces, prism and quaternionic manifolds (see~\cite{hong2012diffeomorphisms}), the homotopy type of~$\DiffMId$ is quite well-understood. For instance we have~$\mathcal{D}_{\mathrm{Id}}(\mathbb{S}^3)\cong \mathrm{SO}(4)$. However, to deduce the homotopy groups of~$\CompFiber$, we lack the understanding of less-studied topological properties of~$\LevM$.
\end{remark}
\section{Fiber of Persistent Homology for continuous maps on the circle and on the interval}
\label{appendix_interval_circle}

In this section the domain of the persistence map consists of continuous maps on the circle:
\[\persmap: \mathcal{C}^{0}(\mathbb S^1,\R) \longrightarrow \Barc^2. \]
Note that in the codomain we record the two barcodes with non-trivial homology, those in degree~$0$ and~$1$. In fact the second barcode contains a unique unbounded interval starting at the maximum of the function on the circle.

We fix a barcode~$D$ with finitely many intervals. When~$f=\mathrm{\underline{cst}}$ is constant it forms the fiber by itself over the trivial barcode~$D=\persmap(f)$ with only two infinite bars~$(\mathrm{\underline{cst}},+\infty)$, one in each degree~$0$ and~$1$. Other barcodes such that~$\persmap^{-1}(D)\neq \emptyset$ have one infinite interval~$(b_0,+\infty)$ in degree~$0$, one infinite interval~$(b_1,+\infty)$ with~$b_0<b_1$ in degree~$1$, finitely many bounded intervals in degree~$0$ with endpoints in~$[b_0,b_1]$, and no other intervals. In the rest of this section we assume that~$D$ is non-trivial and denote by~$(n-1)$, for some~$n\geq 1$, its number of bounded intervals in degree~$0$. 

Let~$\IncHomSpaceS$ be the space of orientation-preserving homeomorphisms of the circle, and~$\NonDecMapsS=\overbar{\IncHomSpaceS}$ be its closure in~$\mathcal{C}^{0}(\mathbb S^1,\R)$ in the compact-open topology. Given~$f\in \mathcal{C}^{0}(\mathbb S^1,\R)$ we have the pre-composition map~$\phi\in \NonDecMapsS \mapsto f\circ \phi \in \mathcal{C}^{0}(\mathbb S^1,\R)$; we denote by~$\LevM$ the stabiliser of~$f$ and by~$\OrbId$ its orbit. 
\begin{proposition}
\label{theorem_fiber_circular}
The fiber~$\persmap^{-1}(D)$ has finitely many path connected components. In each such component~$\CompFiberBis{}$ there exists some~$\SufInj:\mathbb S^1\rightarrow \R$ such that:
\[\CompFiberBis{}= \OrbIdSufInj{},\]
and then~$\CompFiberBis{}$ is homeomorphic to the quotient~$\NonDecMapsS/\LevMSufInj{}$, and in particular is homotopy equivalent to~$\mathbb S^1$.
\end{proposition}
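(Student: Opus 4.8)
The plan is to replay, for continuous circle maps, the orbit--stabilizer strategy of Sections~\ref{sec:morse_functions}--\ref{sec:morse_fiber}, with the reparametrization monoid $\NonDecMapsS$ playing the role previously played by $\DiffMId$. First I would attach to each $f\in\persmap^{-1}(D)$ a finite combinatorial datum: the cyclic word of its $2n$ alternating local extrema labelled by their critical values, together with the merge pattern (which minima are cancelled at which maxima) realizing $D$. Since $D$ prescribes the multiset of $n$ minimal and $n$ maximal values, only finitely many such words occur, which will give the finiteness of the number of path components once the datum is seen to separate them. Inside a fixed type I would single out a \emph{sufficiently injective} representative $\SufInj$, one that is strictly monotone on each closed arc between consecutive critical points (hence has no plateau), obtained by straightening an arbitrary $f$ of that type along a path in the fiber that removes plateaus and interior oscillation without disturbing sublevel-set homology.

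The core identity is $\CompFiberBis{}=\OrbIdSufInj{}$. The inclusion $\OrbIdSufInj{}\subseteq\CompFiberBis{}$ is soft: every $\phi\in\NonDecMapsS$ is a surjective degree-one monotone map, so it restricts to a surjection on each sublevel set and induces isomorphisms on their homology, whence $\persmap(\SufInj\circ\phi)=\persmap(\SufInj)=D$; and $\NonDecMapsS$ is path connected (it deformation retracts onto the rotations by linearly interpolating a lift $\tilde\phi$ towards $x\mapsto x+\tilde\phi(0)$, which stays non-decreasing and degree one), so $\SufInj\circ\phi$ is joined to $\SufInj$ inside the fiber. For the reverse inclusion I would build a \emph{local cross-section} for the $\NonDecMapsS$-action at $\SufInj$ in the sense of Definition~\ref{definition_local_cross_section}: near a fiber element $g$ close to $\SufInj$, inverting $\SufInj$ on each of its monotone arcs and postcomposing with $g$ recovers, arc by arc, a non-decreasing $s(g)\in\NonDecMapsS$ with $\SufInj\circ s(g)=g$, the collapses of $s(g)$ absorbing the plateaus of $g$ --- which is exactly why the monoid rather than $\IncHomSpaceS$ is needed. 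Such sections make each orbit open in the fiber; as the orbits are also path connected and partition the fiber, each path component is a single orbit, proving both $\CompFiberBis{}=\OrbIdSufInj{}$ and that the finitely many types index the components.

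The orbit map $\phi\mapsto\SufInj\circ\phi$ is invariant under the left action $\phi\mapsto\gamma\circ\phi$ of the stabilizer $\LevMSufInj{}$, and because $\SufInj$ is sufficiently injective its point-fibers are exactly the $\LevMSufInj{}$-cosets, so it descends to a continuous bijection $\NonDecMapsS/\LevMSufInj{}\to\CompFiberBis{}$ whose inverse is continuous by the local cross-sections above; hence $\CompFiberBis{}\cong\NonDecMapsS/\LevMSufInj{}$. Strict monotonicity on each arc forbids a stabilizing $\phi$ from collapsing any arc, so $\LevMSufInj{}\subseteq\IncHomSpaceS$ consists of value-preserving orientation-preserving homeomorphisms permuting the finitely many critical points; thus $\LevMSufInj{}$ is a finite cyclic group acting freely (as each $\phi$ is surjective). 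Finally, evaluation at a basepoint $\ast$, namely $\phi\mapsto\phi(\ast)\in\mathbb{S}^1$, is a $\LevMSufInj{}$-equivariant homotopy equivalence $\NonDecMapsS\simeq\mathbb{S}^1$ --- equivariant for the residual free action of $\LevMSufInj{}$ on $\mathbb{S}^1$ by orientation-preserving circle homeomorphisms --- so passing to quotients (a free finite action, via the Borel construction) gives $\CompFiberBis{}\simeq\mathbb{S}^1/\LevMSufInj{}$, and the quotient of $\mathbb{S}^1$ by a finite group of orientation-preserving homeomorphisms acting freely is again $\mathbb{S}^1$.

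The hard part will be the continuity hidden in the second and third paragraphs. For merely continuous $f$ one must define the arcs, plateaus, and the monotone inverse of $\SufInj$ uniformly --- most robustly through the Reeb quotient --- and then prove that the arc-by-arc reparametrization $s(g)$ varies continuously in the compact-open topology even where $s(g)$ degenerates on the plateaus of $g$; this is the crux of both the local cross-section and the straightening step. A secondary subtlety is that $\NonDecMapsS$ is only a monoid, so Remark~\ref{remark_local_sections_implies_triviality} (Palais' theorem) does not apply verbatim, and the quotient homeomorphism must be extracted directly from the cross-sections together with the identification of the point-fibers with $\LevMSufInj{}$-cosets.
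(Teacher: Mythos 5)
Your proposal matches the paper's own argument essentially step for step: the finite combinatorial type of the cyclic sequence of extrema (the paper's Proposition~\ref{prop_fiber_ordering_of_extrema}), the piecewise-monotone standard representative $\SufInj$, the arc-by-arc section $g\mapsto [(\SufInj)_{|[c_{\pi(i)},d_{\pi(i)}]}]^{-1}\circ g$ yielding the homeomorphism $\NonDecMapsS/\LevMSufInj{}\cong\CompFiberBis{}$, and the deformation retraction of $\NonDecMapsS$ onto $\mathrm{SO}(2)$ preserving the finite stabilizer. You also correctly flag the same technical points the paper defers (continuity of the section near plateaus, and the fact that $\NonDecMapsS$ is only a monoid so Palais does not apply verbatim), so no substantive divergence or gap.
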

Unlike the smooth case the component~$\CompFiberBis{}\subseteq\persmap^{-1}(D)$ in the fiber equals the orbit of a function only for a careful choice of function~$\SufInj$: the requirement will be that~$\SufInj$ is injective between its consecutive extrema. Nevertheless the fact that the pre-composition map induces a homeomorphism from~$\NonDecMapsS/\LevMSufInj{}$ to the orbit~$\OrbIdSufInj{}$ is reminiscent of the smooth case, and in fact with slightly more work it can be shown that it defines a~$\LevMSufInj{}$-principal bundle. We state without proof the analogous and simpler result for the unit interval~$[0,1]$, which works with or without fixed values on the boundary points~$0$ and~$1$. 
\begin{proposition}
\label{theorem_fiber_contractible}
For any finite barcode~$D$ the fiber~$\persmap^{-1}(D)\subseteq \mathcal{C}^{0}([0,1],\R)$ has finitely many path connected components, each of which is contractible.
\end{proposition}
Using a fixed orientation on~$\mathbb S^1$ and going around starting from the north pole we can order the~$n$ minima and~$n$ maxima of a non-constant~$f\in \mathcal{C}^{0}(\mathbb{S}^1,\R)$ into a sequence~$\Val(f)$ which we view as an element in~$\R^{2n}$:
\[\Val(f):=m_1(f)<M_1(f)>\cdots >m_n(f)<M_n(f).\]
Associated to this sequence we have the sequence of critical sets of~$f$:
\[\Seq(f): c_1(f),\,d_1(f),\cdots,\, c_{n}(f),\, d_n(f).\]
Explicitly, each~$c_i(f)$ (resp.~$d_i(f)$) is a connected component of~$f^{-1}(m_i(f))$ (resp. of~$f^{-1}(M_i(f))$).
\begin{proposition}
\label{prop_fiber_ordering_of_extrema}
Let~$f\in \persmap^{-1}(D)$. Then~$f$ has~$2n$ extrema, i.e.~$\Val(f)\in \R^{2n}$. In addition, let~$\CyclicGroup{}$ be the group of cyclic permutations on~$n$ elements, which acts on~$\R^{2n}$ by cyclically permuting the~$n$ pairs of entries. Then the connected component~$\CompFiberBis{}$ in the fiber containing~$f$ is made of functions~$g$ whose sequence of extrema is the same as that of~$f$ up to a different ordering, that is:
\begin{equation}
    \label{eq_fiber_ordering_of_extrema}
    \CompFiberBis{}=\big\{ g\in \mathcal{C}^{0}(\mathbb S^1, \R) \, | \, \Val(g)\in \CyclicGroup{}. \Val(f) \big \}
\end{equation}
\end{proposition}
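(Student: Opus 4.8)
The plan is to extract the extrema of any $f$ in the fiber directly from the sublevel-set filtration on the circle, and then to show that moving inside a fixed path component can only permute this data cyclically. First I would count the extrema. Writing $X_x=f^{-1}((-\infty,x])$, the set $X_x$ is empty below the global minimum, equals all of $\mathbb{S}^1$ above the global maximum, and in between is a disjoint union of arcs; hence $H_1(X_x)=0$ until $x$ reaches the global maximum, where the unique infinite degree-$1$ bar $(b_1,+\infty)$ is born, so $b_1=\max f$. In degree $0$, each local minimum creates a new component and each local maximum other than the global one merges two components, while the component of the global minimum survives forever and yields the infinite degree-$0$ bar $(b_0,+\infty)$. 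Since $D$ has finitely many bars, $f$ has finitely many local extrema; on the circle they alternate between minima and maxima, and matching the births and deaths of $D$ forces exactly $n$ minima and $n-1$ non-global maxima, hence $n$ maxima in total and $\Val(f)\in\R^{2n}$. The same bookkeeping identifies the multiset of minimum values with the degree-$0$ births of $D$ and the multiset of maximum values with the degree-$0$ deaths of $D$ together with $b_1$, so both value-multisets depend only on $D$.

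For the inclusion $\CompFiberBis{}\subseteq\{g\mid\Val(g)\in\CyclicGroup{}.\Val(f)\}$ I would pass to the quotient map $[\Val]\colon\CompFiberBis{}\to\R^{2n}/\CyclicGroup{}$ that forgets the choice of basepoint. By the previous step every $g\in\CompFiberBis{}$ has the two fixed value-multisets read from $D$, so the image of $[\Val]$ lands in the finite set of cyclic arrangements of these multisets into the pattern $m_1<M_1>\cdots>m_n<M_n$. The key point is that $[\Val]$ is continuous on the fiber: inside $\CompFiberBis{}$ no extremum can be created or destroyed, since a local minimum colliding with an adjacent local maximum would cancel a birth--death pair and change $D$, so the cyclic arrangement of extrema varies continuously. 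Granting this, the image of $\CompFiberBis{}$ under $[\Val]$ is a connected finite subset of a Hausdorff space, hence the single point $[\Val(f)]$, which gives $\Val(g)\in\CyclicGroup{}.\Val(f)$.

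For the reverse inclusion, suppose $\Val(g)\in\CyclicGroup{}.\Val(f)$. A cyclic permutation of the pairs is realised by precomposing $f$ with a rotation of $\mathbb{S}^1$ that carries the basepoint past the appropriate extrema; rotations lie in $\IncHomSpaceS$, are connected to the identity, and preserve sublevel sets and hence the barcode, so I may assume $\Val(g)=\Val(f)$. Having exactly $2n$ extrema, $f$ and $g$ are monotone on each of the $2n$ arcs between consecutive extrema, the only non-injectivity being plateaus at the extrema; a preliminary homotopy inside the fiber shrinks these plateaus to points, leaving every sublevel set with the same components and so preserving $D$. After this normalisation $f$ and $g$ are strictly monotone between point-extrema with identical extremal values in the same cyclic order, whence $g=f\circ\phi$ for an orientation-preserving homeomorphism $\phi\in\IncHomSpaceS$; a path from the identity to $\phi$ in the connected group $\IncHomSpaceS$ then produces a path $f\circ\phi_t$ inside the fiber from $f$ to $g$.

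The step I expect to be hardest is the continuity and local constancy of the extrema data for genuinely continuous, non-Morse functions: one must use the tameness forced by the finiteness of $D$ to prevent oscillations from accumulating into extrema and to ensure that, along a path in the fiber, the extrema move continuously without colliding or reordering. Controlling plateaus and degenerate extrema, both in the continuity argument of the second step and in the normalisation of the third, is where the technical care concentrates; the remainder is the elementary combinatorics of sublevel sets on the circle.
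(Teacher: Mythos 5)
The paper gives no proof of this proposition (it is explicitly omitted as ``elementary''), so there is nothing to compare line by line; judged on its own, your argument is essentially the intended one and is correct in outline. The extrema count from the degree-$0$ births/deaths and the single degree-$1$ birth is right, and your forward inclusion (local constancy of the cyclic arrangement $[\Val]$ on the fiber, plus connectedness) is exactly the mechanism the authors invoke in Figure~1 for the continuity of their local section. Two points need repair or more care. First, your claim that after fixing $\Val(g)=\Val(f)$ the ``only non-injectivity'' of $f$ and $g$ on the arcs between consecutive extrema is ``plateaus at the extrema'' is false: a continuous function can have non-extremal plateaus (level-set components that are neither local maxima nor local minima) in the interior of a monotone arc, and these also obstruct writing $g=f\circ\phi$ with $\phi$ a homeomorphism. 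The fix is easy --- either extend your normalising homotopy to flatten interior plateaus as well (straight-line interpolation on each monotone arc with endpoints fixed preserves every sublevel set up to deformation and hence the barcode), or avoid homeomorphisms altogether by using the monotone degree-one maps $\phi^{f,\pi}\in\NonDecMapsS$ of Eq.~\eqref{eq_phi_f}, which collapse plateaus and still give $f=\SufInj\circ\phi^{f,\pi}$; path-connectedness of $\NonDecMapsS$ then yields the path in the fiber. Second, the tameness step --- that finiteness of $D$ forces finitely many extremal level-set components and rules out accumulation of oscillations --- is genuinely where the work is for merely continuous $f$, and you defer it; it is provable (an accumulation of local extrema with separated values produces infinitely many bars or a non-pointwise-finite module), but as written it is an acknowledged gap rather than a completed argument.
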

We omit the proof of this elementary statement. So if~$\CompFiberBis{}$ is a component in the fiber, we can pick the following simple function~$\SufInj$ in~$\CompFiberBis{}$, whose critical sets and extrema are denoted by~$c_i,d_i,m_i,M_i$ for simplicity: the critical sets~$c_i$ and~$d_i$ are singletons arranged on the regular~$2n$-gon in~$\mathbb S^1$ and on each circular arc~$[c_{i},d_i]$,~$\SufInj$ restricts to the linear homeomorphism to~$[m_i,M_i]$.
\begin{proposition}
\label{prop_homeomorphism_precomposition}
Let~$\CompFiberBis{}\subseteq \persmap^{-1}(D)$ be a path component in the fiber. Then the pre-composition map~$\phi \mapsto \SufInj \circ \phi$ induces a homeomorphism from~$\NonDecMapsS/\LevMSufInj{}$ to~$\CompFiberBis{}$.
\end{proposition}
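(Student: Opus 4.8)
The plan is to analyze the pre-composition map $P:\NonDecMapsS\to \mathcal C^0(\mathbb S^1,\R)$, $P(\phi)=\SufInj\circ\phi$, by establishing in turn that it is continuous, that its image is exactly $\CompFiberBis$, that its fibers are the orbits of $\LevMSufInj$, and finally that the induced continuous bijection on the quotient is open. Continuity of $P$ is immediate: if $\phi_k\to\phi$ uniformly then $\SufInj\circ\phi_k\to\SufInj\circ\phi$ uniformly because $\SufInj$ is uniformly continuous on the compact circle. For the image, recall from Proposition~\ref{prop_fiber_ordering_of_extrema} that every $g\in\CompFiberBis$ has the same cyclic sequence of extremal values as $\SufInj$, up to the action of $\CyclicGroup$; in particular $g$ is weakly monotone between consecutive extrema and takes the same range $[m_i,M_i]$ on its $i$-th arc as $\SufInj$ does on its $i$-th arc $A_i$. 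Since $\SufInj$ is by construction injective (indeed piecewise linear) between consecutive extrema, each restriction $\SufInj|_{A_i}:A_i\to[m_i,M_i]$ is a homeomorphism, and one may set $\phi(x)=(\SufInj|_{A_i})^{-1}(g(x))$ for $x$ in the $i$-th arc of $g$. This formula produces a well-defined, continuous, weakly monotone degree-one self-map of $\mathbb S^1$ with $\SufInj\circ\phi=g$; hence $\phi\in\NonDecMapsS$ and $P$ is onto $\CompFiberBis$, reproving the equality $\CompFiberBis=\OrbIdSufInj$.

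Next I would identify the fibers of $P$. The group $\LevMSufInj$ acts on $\NonDecMapsS$ by post-composition $h\cdot\phi=h\circ\phi$, and since $\SufInj\circ h=\SufInj$ this action preserves the fibers of $P$. Conversely, if $\SufInj\circ\phi=\SufInj\circ\psi$, then because $\SufInj$ is injective on each arc, $\phi$ and $\psi$ agree wherever the common map $\SufInj\circ\phi$ is strictly monotone and differ only by a relabeling of the critical points on the flat (collapsed) portions, so that $\psi=h\circ\phi$ for a unique $h\in\LevMSufInj$. Moreover $\LevMSufInj$ is the finite cyclic group of those rotations of the regular $2n$-gon that preserve $\Val(\SufInj)$: any $h$ fixing $\SufInj$ permutes its level sets, cannot collapse an arc (else $\SufInj=\SufInj\circ h$ would be constant there, contradicting injectivity), hence cyclically rotates the critical points through multiples of $2\pi/n$, and is then determined on each arc by $\SufInj\circ h=\SufInj$. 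As each $\phi\in\NonDecMapsS$ is surjective, the action is free, so $\pi:\NonDecMapsS\to \NonDecMapsS/\LevMSufInj$ is an open quotient map and $P$ descends to a continuous bijection $\bar P:\NonDecMapsS/\LevMSufInj\to\CompFiberBis$.

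It remains to prove that $\bar P$ is a homeomorphism, for which it suffices, $\pi$ being open, to show that $P$ itself is open; equivalently, that the reconstruction $g\mapsto\phi$ above is continuous. I would prove this via the estimate coming from the arcwise inverses: as $\SufInj$ is piecewise linear and injective between consecutive extrema, each $(\SufInj|_{A_i})^{-1}$ is Lipschitz, so on a neighborhood of a given $g$ on which a continuous labeling of the extrema of nearby $g'$ is available, $\|\phi_{g'}-\phi_{g}\|_\infty$ is controlled by $\|g'-g\|_\infty$, the two arcwise inverses matching continuously across each critical point, where both return the same endpoint of the arc. This gives a continuous local section of $P$, hence openness. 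The main obstacle is to push this estimate through the degenerate configurations: where $g$ has a flat stretch and the corresponding $\phi$ collapses an arc to a point, and where an extremum crosses the north pole so that no global continuous labeling exists. The first is handled by observing that the single formula $\phi(x)=(\SufInj|_{A_i})^{-1}(g(x))$ already produces the collapse and remains Lipschitz in $g$; the second is precisely the monodromy responsible for the circular topology, and it is absorbed by passing to the quotient, where $\pi\circ(g\mapsto\phi_g)$ is independent of the local labeling and therefore globally continuous, supplying the continuous inverse of $\bar P$.
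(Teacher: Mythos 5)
Your overall strategy mirrors the paper's: realize every $g\in\CompFiberBis$ as $\SufInj\circ\phi$ via the arcwise inverses of the piecewise-linear $\SufInj$, identify the fibers of the pre-composition map with $\LevMSufInj$-orbits, and obtain continuity of the inverse from Lipschitz local sections that track the extrema, absorbing the monodromy by passing to the quotient. Two points need repair, though. First, you assert that the image of $P$ is exactly $\CompFiberBis$ but only prove the inclusion $\CompFiberBis\subseteq\im P$; the reverse inclusion (needed for $\bar P$ to be well-defined as a map \emph{into} $\CompFiberBis$) is not automatic, and it is the first thing the paper checks: for $\phi\in\IncHomSpaceS$ one has $\persmap(\SufInj\circ\phi)=\persmap(\SufInj)=D$ because $\phi$ restricts to homeomorphisms between sublevel sets, and this persists on the closure $\NonDecMapsS$ by continuity of $\persmap$; path-connectedness of $\NonDecMapsS$ then places the whole image in the component of $\SufInj$.

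Second, in identifying the fibers you claim that if $\SufInj\circ\phi=\SufInj\circ\psi$ then $\phi$ and $\psi$ agree wherever the common map is strictly monotone. This is false whenever $\LevMSufInj$ is nontrivial: if $h$ is a nontrivial rotation with $\SufInj\circ h=\SufInj$ (which occurs exactly when $\Val(\SufInj)$ has a nontrivial cyclic symmetry), then $\phi=\mathrm{Id}$ and $\psi=h$ have the same image under $P$ but agree nowhere. Injectivity of $\SufInj$ on a single arc only pins down $\phi(x)$ once one knows \emph{which} arc contains it; the correct argument is that the assignment of arcs is a locally constant, hence globally constant, cyclic shift preserving $\Val(\SufInj)$, i.e.\ an element of $\LevMSufInj$, which recovers your conclusion $\psi=h\circ\phi$. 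This is in substance what the paper does by classifying all solutions of $\SufInj\circ\phi=g$ as the maps $\phi^{g,\pi}$ indexed by the admissible cyclic permutations $\pi$. With these two repairs your argument coincides with the paper's, including the omission of the same technical details in the openness step.
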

\begin{proof}
The map~$\phi\in \NonDecMapsS \mapsto \SufInj\circ \phi\in \CompFiberBis{}$ is well-defined, i.e.~$\persmap(f\circ\phi)=\persmap(f)=D$. This is because a homeomorphism~$\phi\in \IncHomSpaceS$ restricts to a homeomorphism between the sub level-sets of~$\SufInj\circ\phi$ and those of~$\SufInj$, hence it induces an isomorphism of persistent homology modules and the equality of barcodes~$\persmap(\SufInj\circ\phi)=\persmap(\SufInj)$, which holds as well for any~$\phi\in \NonDecMapsS= \overbar{\IncHomSpaceS}$ by continuity of~$\persmap$. 

Let~$f\in \CompFiberBis{}$. From Proposition~\ref{prop_fiber_ordering_of_extrema} there are cyclic permutations~$\pi\in \CyclicGroup{}$ such that~$\Val(f)=\pi.\Val(\SufInj)$. For each such permutation~$\pi$ there is a unique map~$\phi^{f,\pi}$ satisfying both~$\SufInj\circ \phi^{f,\pi}=f$ and~$\phi^{f,\pi}(c_i(f))=c_{\pi(i)}$ (and~$\phi^{f,\pi}(d_i(f))=d_{\pi(i)}$): It is defined on each circular arc~$[c_{i}(f),d_i(f)]$ by
\begin{equation}
    \label{eq_phi_f}
    \phi^{f,\pi}_{|[c_{i}(f),d_{i}(f)]}:=  [(\SufInj)_{|[c_{\pi(i)},d_{\pi(i)}]}]^{-1} \circ f_{|[c_{i}(f),d_i(f)]},
\end{equation}
and similarly on circular arcs~$[d_{i-1}(f),c_i(f)]$. In particular for~$f=\SufInj$ the set of such~$\phi^{f,\pi}$ equals the group~$\LevMSufInj{}$ of stabilisers. Therefore~$\phi\mapsto \SufInj\circ \phi$ descends to a continuous bijection from~$ \NonDecMapsS/\LevMSufInj{}$ to~$\CompFiberBis{}$. 

Finally we show that the inverse is continuous. Let~$f\in \CompFiberBis{}$ and~$\phi^{f,\pi}$ as in~\eqref{eq_phi_f}. Up to pre-composing~$f$ by a suitable homeomorphism the north pole does not belong to any extremal set~$c_i(f),d_i(f)$. Consequently, for~$g$ in a small neighborhood~$\Nbhood\subseteq \CompFiberBis{}$ around~$f$, we also have~$\Val(g)=\pi.\Val(\SufInj)$, hence we can define~$\phi^{g,\pi} \in \NonDecMapsS$ like in Eq.~\eqref{eq_phi_f} and then~$\SufInj\circ \phi^{g,\pi}=g$. Hence the map $g\in \Nbhood \longmapsto \phi^{g,\pi} \in \NonDecMapsS$ is a local section, whose continuity is a consequence of the fact that on each circular arc~$[c_i,d_i]$ the linear restriction~$(\SufInj)_{|[c_i,d_i]}$ and its inverse are Lipschitz, and of the fact that the maximal distance from points in the critical sets~$c_i(g),d_i(g)$ to the critical sets~$c_i(f),d_i(f)$ of~$f$ can be continuously tracked in a sufficiently small neighborhood~$\Nbhood{}$ of~$f$, see~Fig.~\ref{fig:tracking_extrema}. The technical details are omitted.
\end{proof}
\begin{figure}[htbp]
\centering
\resizebox{ .6\textwidth}{!}{
\begin{tikzpicture}[
  declare function={
    f(\x)= (\x < 0) * ({\x*\x*\x*\x - \x*\x + 1})   +
              and(\x >= 0, \x < .3) * (1)     +
              (\x >= .3) * ({(\x-.3)*(\x-.3)*(\x-.3)*(\x-.3) - (\x-.3)*(\x-.3) + 1})
   ;
  },
  ]
\draw[black, line width = 0.05mm]   plot[smooth,domain=-1.05:.6] (\x,{(\x < 0) * (\x*\x*\x*\x - \x*\x + 1)   +
              and(\x >= 0, \x < .3) * (1)     +
              (\x >= .3) * ((\x-.3)*(\x-.3)*(\x-.3)*(\x-.3) - (\x-.3)*(\x-.3) + 1)});

\draw[line width = 0.05, dash pattern=on 1pt off 1pt]   plot[smooth,domain=-1.05:.6] (\x,{(\x < 0) * (\x*\x*\x*\x - \x*\x + 1.05)   +
              and(\x >= 0, \x < .3) * (1.05)     +
              (\x >= .3) * ((\x-.3)*(\x-.3)*(\x-.3)*(\x-.3) - (\x-.3)*(\x-.3) + 1.05)});

\draw[line width = 0.05, dash pattern=on 1pt off 1pt]   plot[smooth,domain=-1.05:.6] (\x,{(\x < 0) * (\x*\x*\x*\x - \x*\x + .95)   +
              and(\x >= 0, \x < .3) * (.95)     +
              (\x >= .3) * ((\x-.3)*(\x-.3)*(\x-.3)*(\x-.3) - (\x-.3)*(\x-.3) + .95)});

\draw[line width = 0.05, dash pattern=on \pgflinewidth off 0.5pt] (-1.05,.75) -- (.6,.75);
\node[scale = 0.15] at (-1,.78) {$m_i$};

\draw[line width = 0.05, dash pattern=on \pgflinewidth off 0.5pt] (-.85,.75) -- (-.85,.5);
\draw[line width = 0.05, dash pattern=on \pgflinewidth off 0.5pt] (-.53,.75) -- (-.53,.5);
\draw[line width = 0.05] (-.85,.5) -- (-.53,.5);
\node[scale = 0.15] at (-.707,.53) {${U}_i$};

\draw[line width = 0.05, dash pattern=on \pgflinewidth off 0.5pt] (-1.05,1) -- (.6,1);
\node[scale = 0.15] at (-.4,1.03) {$M_i$};

\draw[line width = 0.05, dash pattern=on \pgflinewidth off 0.5pt] (-.23,1) -- (-.23,.5);
\draw[line width = 0.05, dash pattern=on \pgflinewidth off 0.5pt] (.53,1) -- (.53,.5);
\draw[line width = 0.05] (-.23,.5) -- (.53,.5);
\node[scale = 0.15] at (.15,.53) {$V_i$};

\draw[line width = 0.05, dash pattern=on \pgflinewidth off 0.5pt] (-.707,.75) -- (-.707,1.2);
\node at (-.707,1.2)[circle,fill, line width = .05mm, inner sep=.15pt, minimum size = 0pt]{};
\node[scale = 0.15] at (-.707,1.24) {$c_i(f)$};

\draw[line width = 0.05, dash pattern=on \pgflinewidth off 0.5pt] (0,1) -- (0,1.2);
\draw[line width = 0.05, dash pattern=on \pgflinewidth off 0.5pt] (.3,1) -- (.3,1.2);
\draw[line width = 0.05] (0,1.2) -- (.3,1.2);
\node[scale = 0.15] at (.15,1.24) {$d_i(f)$};

\end{tikzpicture}

}
\caption{A piece of a continuous function $f:\mathbb{S}^1\rightarrow \R$ and a small neighborhood~$\Nbhood$ indicated by dashed curves. Any function in~$\persmap^{-1}(D)$ between the dashed curves must have a critical value in each~$U_i$ and~$V_i$, provided the band between the dashed curves is thin enough to separate critical values. If $g$ is such a function then these must be the only critical values. Then the critical value of~$g$ in~$U_i$ must be~$m_i$, in~$V_i$ must be~$M_i$, and so on.}

\label{fig:tracking_extrema}

\end{figure}
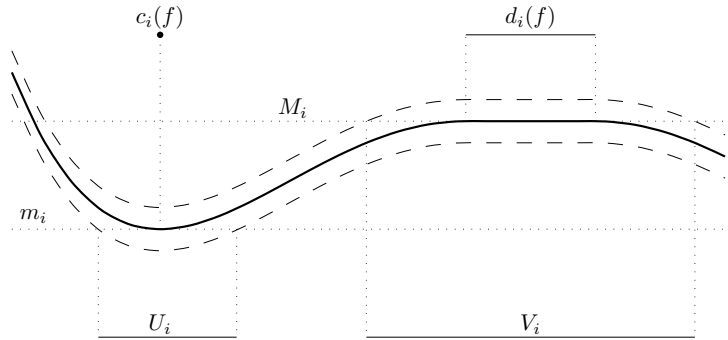

\begin{proof}[Proof of~Proposition~\ref{theorem_fiber_circular}]
From Proposition~\ref{prop_homeomorphism_precomposition} the pre-composition map~$\phi \mapsto \SufInj\circ \phi$ induces a homeomorphism from~$\NonDecMapsS/ \LevMSufInj{}$ to the path connected component~$\CompFiberBis{}$. Besides it is well-known that~$\NonDecMapsS$ deformation retracts to the group~$\mathrm{SO}(2)\cong \mathbb{S}^1$ of orientation preserving rotations.\footnote{For instance the deformation retract of~$\IncHomSpaceS$ of~\citet[Theorem~1.1.2]{hamstrom1974homotopy} extends to~$\NonDecMapsS$.} Recall that~$\SufInj$ is a piece-wise linear interpolation between extremal values arranged on a regular~$2n$-gon, therefore its stabiliser~$\LevMSufInj{}$ is a finite subgroup of~$\mathrm{SO}(2)$ which is preserved under the deformation retraction. Hence~$\CompFiberBis{}$ is homotopy equivalent to the quotient of~$\mathrm{SO}(2)\cong \mathbb{S}^1$ by a finite subgroup, so it is in fact homotopy equivalent to~$\mathbb{S}^1$.  
\end{proof}

\subsection*{Conflict of Interest} On behalf of all authors, the corresponding author states that there is no conflict of interest. 

\bibliographystyle{plainnat}
\bibliography{reference}

\begin{thebibliography}{20}
\providecommand{\natexlab}[1]{#1}
\providecommand{\url}[1]{\texttt{#1}}
\expandafter\ifx\csname urlstyle\endcsname\relax
  \providecommand{\doi}[1]{doi: #1}\else
  \providecommand{\doi}{doi: \begingroup \urlstyle{rm}\Url}\fi

\bibitem[Barannikov(1994)]{barannikov1994framed}
Serguei Barannikov.
\newblock The framed {M}orse complex and its invariants.
\newblock \emph{American Mathematical Society Translations, Series 2}, 1994.

\bibitem[Catanzaro et~al.(2020)Catanzaro, Curry, Fasy, Lazovskis, Malen, Riess,
  Wang, and Zabka]{catanzaro2020moduli}
Michael~J Catanzaro, Justin~M Curry, Brittany~Terese Fasy, J{\=a}nis Lazovskis,
  Greg Malen, Hans Riess, Bei Wang, and Matthew Zabka.
\newblock Moduli spaces of {M}orse functions for persistence.
\newblock \emph{Journal of Applied and Computational Topology}, pages 1--33,
  2020.

\bibitem[Cerf(1961)]{cerf1961topologie}
Jean Cerf.
\newblock Topologie de certains espaces de plongements.
\newblock \emph{Bulletin de la Soci{\'e}t{\'e} Math{\'e}matique de France},
  89:\penalty0 227--380, 1961.

\bibitem[Cerf(1970)]{cerf1970stratification}
Jean Cerf.
\newblock La stratification naturelle des espaces de fonctions
  diff\'{e}rentiables r\'{e}elles et le th\'{e}or\`eme de la pseudo-isotopie.
\newblock \emph{Inst. Hautes \'{E}tudes Sci. Publ. Math.}, \penalty0
  (39):\penalty0 5--173, 1970.
\newblock ISSN 0073-8301.

\bibitem[Cohen-Steiner et~al.(2007)Cohen-Steiner, Edelsbrunner, and
  Harer]{cohen2007stability}
David Cohen-Steiner, Herbert Edelsbrunner, and John Harer.
\newblock Stability of persistence diagrams.
\newblock \emph{Discrete \& Computational Geometry}, 37\penalty0 (1):\penalty0
  103--120, 2007.

\bibitem[Crawley-Boevey(2015)]{crawley2015decomposition}
William Crawley-Boevey.
\newblock Decomposition of pointwise finite-dimensional persistence modules.
\newblock \emph{Journal of Algebra and its Applications}, 14\penalty0
  (05):\penalty0 1550066, 2015.

\bibitem[Curry(2018)]{curry2018fiber}
Justin Curry.
\newblock The fiber of the persistence map for functions on the interval.
\newblock \emph{Journal of Applied and Computational Topology}, 2\penalty0
  (3-4):\penalty0 301--321, 2018.

\bibitem[Cyranka et~al.(2020)Cyranka, Mischaikow, and
  Weibel]{cyranka2018contractibility}
Jacek Cyranka, Konstantin Mischaikow, and Charles Weibel.
\newblock Contractibility of a persistence map preimage.
\newblock \emph{Journal of Applied and Computational Topology}, 4\penalty0
  (4):\penalty0 509--523, 2020.

\bibitem[Edelsbrunner and Harer(2008)]{edelsbrunner2008persistent}
Herbert Edelsbrunner and John Harer.
\newblock Persistent homology-a survey.
\newblock \emph{Contemporary mathematics}, 453:\penalty0 257--282, 2008.

\bibitem[Hamstrom et~al.(1974)]{hamstrom1974homotopy}
Mary-Elizabeth Hamstrom et~al.
\newblock Homotopy in homeomorphism spaces, {$ TOP $ and $ PL $}.
\newblock \emph{Bulletin of the American Mathematical Society}, 80\penalty0
  (2):\penalty0 207--230, 1974.

\bibitem[Hong et~al.(2012)Hong, Kalliongis, McCullough, and
  Rubinstein]{hong2012diffeomorphisms}
Sungbok Hong, John Kalliongis, Darryl McCullough, and J~Hyam Rubinstein.
\newblock \emph{Diffeomorphisms of elliptic 3-manifolds}, volume 2055.
\newblock Springer, 2012.

\bibitem[Leygonie and Henselman-Petrusek(2021)]{leygonie2021algorithmic}
Jacob Leygonie and Gregory Henselman-Petrusek.
\newblock Algorithmic reconstruction of the fiber of persistent homology on
  cell complexes.
\newblock \emph{arXiv preprint arXiv:2110.14676}, 2021.

\bibitem[Leygonie and Tillmann(2022)]{leygonie2021fiber}
Jacob Leygonie and Ulrike Tillmann.
\newblock The fiber of persistent homology for simplicial complexes.
\newblock \emph{Journal of Pure and Applied Algebra}, page 107099, 2022.

\bibitem[Leygonie et~al.(2021)Leygonie, Oudot, and
  Tillmann]{leygonie2021framework}
Jacob Leygonie, Steve Oudot, and Ulrike Tillmann.
\newblock A framework for differential calculus on persistence barcodes.
\newblock \emph{Foundations of Computational Mathematics}, pages 1--63, 2021.

\bibitem[Maksymenko(2006)]{maksymenko2006homotopy}
Sergiy Maksymenko.
\newblock Homotopy types of stabilizers and orbits of {M}orse functions on
  surfaces.
\newblock \emph{Annals of Global Analysis and Geometry}, 29\penalty0
  (3):\penalty0 241--285, 2006.

\bibitem[Mather(1969)]{mather1969stability}
John~N Mather.
\newblock Stability of {$C^\infty$} mappings: {II.} infinitesimal stability
  implies stability.
\newblock \emph{Annals of Mathematics}, pages 254--291, 1969.

\bibitem[Mischaikow and Weibel(2021)]{mischaikow2021persistent}
Konstantin Mischaikow and Charles Weibel.
\newblock Persistent homology with non-contractible preimages.
\newblock \emph{arXiv preprint arXiv:2105.08130}, 2021.

\bibitem[Palais(1960)]{palais1960local}
Richard~S Palais.
\newblock Local triviality of the restriction map for embeddings.
\newblock \emph{Commentarii Mathematici Helvetici}, 34\penalty0 (1):\penalty0
  305--312, 1960.

\bibitem[Smale(1959)]{smale1959diffeomorphisms}
Stephen Smale.
\newblock Diffeomorphisms of the 2-sphere.
\newblock \emph{Proceedings of the American Mathematical Society}, 10\penalty0
  (4):\penalty0 621--626, 1959.

\bibitem[Zomorodian and Carlsson(2005)]{zomorodian2005computing}
Afra Zomorodian and Gunnar Carlsson.
\newblock Computing persistent homology.
\newblock \emph{Discrete \& Computational Geometry}, 33\penalty0 (2):\penalty0
  249--274, 2005.

\end{thebibliography}
\end{document}